\theoremstyle{definition}
\newtheorem{theo}{Theorem}[section]
\newtheorem{pro}[theo]{Proposition}
\newtheorem{coro}[theo]{Corollary}
\newtheorem{lem}[theo]{Lemma}
\newtheorem{exam}[theo]{Example}
\newtheorem{rem}[theo]{Remark}
\newtheorem{defi}[theo]{Definition}
\newcommand{\ndim}{\mbox{{\it n}-{\rm dim}}}
\newcommand{\kdim}{\mbox{{\it k}-{\rm dim}}}
\newcommand{\gdim}{\mbox{{\it G}-{\rm dim}}}
\newcommand{\hdim}{\mbox{{\it h}-{\rm dim}}}
\newcommand{\be}{\begin{enumerate}}
\newcommand{\ee}{\end{enumerate}}
\begin{document}

\title[Dimension symmetry]{On multiplication $fs$-modules and dimension symmetry}
	
	\author{S. M. Javdannezhad}
	\address{Sayed Malek Javdannezhad
		\newline Orcid number: 0000-0001-7752-8047
		\newline Department of Science
		\newline Shahid Rajaee Teacher Training University: Tehran, Iran
		\newline Tehran, Iran}
	\email{sm.javdannezhad@gmail.com}
	
	\author{S. F. Mousavinasab}
	\address{Sayedeh Fatemeh Mousavinasab
	\newline Orcid number: 0000-0001-6027-7587 
		\newline Department of Mathematics
		\newline Shahid Chamran University of Ahvaz
		\newline Ahvaz, Iran}
	\email{mousavinasabfa@gmail.com}
	
	\author{N. Shirali*}
	\address{Nasrin Shirali	
\newline Orcid number: 0000-0002-9907-752
		\newline Department of Mathematics
		\newline Shahid Chamran University of Ahvaz
		\newline Ahvaz, Iran}
	\email{shirali\_n@scu.ac.ir\\nasshirali@gmail.com}

\thanks{\hspace*{-0.60cm} {\footnotesize\noindent {$^*$}Corresponding author, 
ORCID: 0000-0002-9907-752,\\  
e-mail: shirali\_n@scu.ac.ir\\nasshirali@gmail.com}\\
{\footnotesize DOI: 10.xxxxx/jmmrc.2021.yyyyy.zzzz 
\hspace{3cm}\copyright{ the Author(s)}\\
How to cite: S. M. Javdannezhad, S. F. Mousavinasab, N. Shirali, {\it On multiplication $fs$-modules and dimension symmetry},
 J. Mahani Math. Res. Cent. 2022; 11(2): 1-xx.
		}}

\fancyhead[CO]{\tiny{Dimension symmetry   {JMMRC Vol. 11, No. 2 (2022)  }}}
\fancyhead[CE]{ S. M. Javdannezhad, S. F.  Mousavinasab, N. Shirali}

\maketitle

\begin{center}
	{\tiny Article type: Research Article \\(Received xx August 202x, Revised: xx October 202x, Accepted: xx June 202x)}\\
	{\tiny (Communicated by ....)}
\end{center}

\begin{abstract}
In this paper, we first study $fs$-modules, i.e., modules with finitely many small submodules.   
 We show that  every $fs$-module with finite hollow dimension  is Noetherian.
  Also, we prove that an $R$-module $M$ with finite Goldie dimension, is an $fs$-module if and only if $M = M_1 \oplus M_2$,  where $M_1$ is  semisimple  and $M_2$ is an $fs$-module with $Soc(M_2) \ll M$. 
  Then, we investigate multiplication $fs$-modules over commutative rings  
and we prove that the lattices of $R$-submodules of $M$ and $S$-submodules of $M$ are coincide, where $S=End_R(M)$. Consequently, $M_R$ and $_SM$ have the same  Krull (Noetherian, Goldie and hollow) dimension. Further, we prove that  for any self-generator multiplication  module $M$,  the $fs$-module as a right $R$-module and as a left $S$-module are equivalent.\\
 \keywords{Small submodules, $fs$-modules, Multiplication modules, Dimension symmetry.} 
\subject{Primary  16P60, 16P20, 16P40.}
\end{abstract}

\section{Introduction}
In this paper, we focus on modules with finitely many small submodules (briefly, $fs$-modules).  It is  well known that  $M$ is a semisimple and Noetherian module if and only if it  is  Artinian  with $Rad(M)=0$ (i.e., $0$ is the only small submodule of $M$). Thus every Artinian module  $M$ with $Rad(M)=0$, is Noetherian. Motivated by this, it is natural to ask: Is any Artinian $fs$-module,     Noetherian? 
We first try to answer to this question and then investigate multiplication $fs$-modules  over commutative rings . For this,  we study  some basic properties of  modules  with finitely many small submodules.
For instance, we show that  $M$ is an $fs$-module if and only if $Rad(M)$ has only finitely many submodules. 
 Also, we show that $fs$-modules are  closed under submodules and small quotients (i.e., every factor $\frac{M}{N}$, where $N$ is small in $M$). 
 We prove that  every $fs$-module with finite hollow dimension  is Noetherian.
 Actually, we extend the latter fact to a larger class of modules  (note, every  Artinian module has finite hollow dimension).
 In particular,  we show that an $R$-module $M$ with finite Goldie dimension is an $fs$-module if and only if  $M = M_1 \oplus M_2$,  where $M_1$ is  semisimple and $M_2$ is  an $fs$-module with $Soc(M_2) \ll M$.  Moreover, we  give some examples, to show that for an arbitrary module $M$, the properties of being an $fs$-module and to have finite hollow dimension are independent.
 Then, we focus on multiplication $fs$-modules over commutative rings. 
 In particular, for any $N \subseteq M$, we prove that $N$ is an $R$-submodule of $M$ if and only if it is an $S$-submodule of $M$, where $S=End_R(M)$. This immediately implies that the lattices of $R$-submodules of $M$ and $S$-submodules of $M$ are coincide.
 Consequently, $M_R$ and $_SM$ have the same Krull (Noetherian, Goldie, hollow) dimension. Also, we show that  for any self-generator multiplication  module $M$,  the concept of an $fs$-module as a right $R$-module and as a left $S$-module are equivalent.\\ 
   Throughout this article, all rings are  associative with non-zero identity and all modules are unital right modules.
 Let $R$ be a ring and $M$ be an $R$-module. $R$ is called local, if  it has a unique maximal right (equivalently, left) ideal. $M$ is called local if it has exactly one maximal submodule that contains  all its proper submodules. The notation $N \subseteq_e M$ (resp., $N \ll M$) will denote $N$ is an essential (resp., a small) submodule of $M$, that is,  $N \cap A \ne 0$ (resp., $N+ A \ne M$), for all non-zero (resp., proper) submodules $A$ of $M$. $M$ has finite Goldie (resp., hollow) dimension if for any ascending (resp., descending) chain $N_{1}\subseteq N_{2}\subseteq \cdots$ (resp., $N_{1}\supseteq N_{2}\supseteq \cdots $) of submodules of $M$ there exists an integer $n \geq 1$,  such that $N_n \subseteq_e N_k$ (resp., $ \frac{N_n}{N_k} \ll \frac{M}{N_k}$ )  for all $k \geq n$. $Soc(M)$ (resp., $Rad(M)$) will denote the socle (resp., radical) of $M$, i.e., the sum (resp., intersection) of all minimal (resp., maximal) submodules of $M$. Also, $Soc(M)$ (resp., $Rad(M)$) is equal to the intersection (resp., sum) of all essential (resp., small) submodules of $M$. If $M$ fails to have minimal (resp., maximal) submodules, we set $Soc(M)=0$ (resp., $Rad(M) = M$) and in case,  $M$ fails to have proper essential (resp., nonzero small) submodule, then $Soc(M)=M$ (resp., $Rad(M) = 0$). Hence, any module with non-trivial socle (resp., radical) has both minimal and proper essential (resp., maximal and non-zero small) submodules. Also, $J(R)$ denote the Jacobson radical of $R$, i.e., the intesection of all maximal right  ideals of $R$. Note that $J(R) = Rad(R_R) = Rad(_RR)$. Finally, $\kdim\,M$, $\ndim\,M$, $\gdim\,M$ and $\hdim\,M$ respectively, denote the  Krull dimension, the Noetherian dimension, the Goldie dimension and the hollow dimension of $M$. The Noetherian dimension is also known as the dual Krull dimension and $N$-dimension. For more details on these dimensions and undefined terms and notations, we refer to  \cite{al-sm, ch, goo, kh, ksh, le2, wis}.

\section{$fs$-modules and $fs$-rings}
  In this section, we give our definition of $fs$-modules and study their properties.

  \begin{defi}
 An $R$-module $M$ with only finitely many non-zero small submodules is said to be an $fs$-module. A ring $R$ is called a right (left) $fs$-ring if as a right (left) $R$-module, it is an $fs$-module.
  \end{defi}

  \begin{rem}
  Let $M$ be an $R$-module.
  \begin{enumerate}
    \item If $Rad(M) = 0$, then $M$ is an $fs$-module.
    \item If $Rad(M) = M$, then $M$ is not an $fs$-module. For this, note that every finite sum of small submodules is a small submodule, so if $M$ is an $fs$-module $M$, then $Rad(M)$ is a small submodule of $M$, hence $Rad(M) \ne M$. 
    \item If $M$ is an $fs$-module, then $M$ has at least one maximal submodule.
  \end{enumerate}
  \end{rem}

    The following easy access results show that the class of $fs$-modules are closed under submodules and  small quotients small quotients (i.e., every factor $\frac{M}{N}$, where $N$ is small in $M$), see also \cite[Propositions 2.5, 2.6]{ja-sh-4}. 

   \begin{pro}\label{3-10}
   The following are equivalent for any $R$-module $M$.
   \begin{enumerate}
   \item $M$ is an  $fs$-module.
   \item  Every submodule of $M$ is an $fs$-module.
   \item  Every small quotient of $M$  is an  $fs$-module.
  \end{enumerate}
  \end{pro}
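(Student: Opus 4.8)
The plan is to run the cycle through two immediate implications returning to $(1)$ and two substantive outgoing implications, the latter resting on standard behaviour of small submodules under passage to submodules and to quotients (see, e.g., \cite{wis}). First, $(2)\Rightarrow(1)$ is trivial, since $M$ is a submodule of itself. Likewise $(3)\Rightarrow(1)$ is immediate once one notes that $0\ll M$ always holds, so that $M=M/0$ is itself a small quotient of $M$; applying $(3)$ with $N=0$ yields $(1)$.

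For $(1)\Rightarrow(2)$, the fact I would use is that smallness is inherited upward: if $L\ll N$ and $N\subseteq M$, then $L\ll M$. (A one-line verification: if $L+X=M$ then $N=N\cap(L+X)=L+(N\cap X)$ by modularity, since $L\subseteq N$, so $L\ll N$ forces $N\cap X=N$, i.e. $N\subseteq X$; as $L\subseteq N\subseteq X$ this gives $X=M$.) Granting this, every non-zero small submodule of a submodule $N\subseteq M$ is in particular a non-zero small submodule of $M$. Hence the non-zero small submodules of $N$ form a subset of those of $M$, which is finite by $(1)$; therefore $N$ is an $fs$-module.

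For $(1)\Rightarrow(3)$, I would invoke the correspondence governing small submodules of a quotient by a small submodule: if $N\ll M$ and $N\subseteq K\subseteq M$, then $K/N\ll M/N$ is equivalent to $K\ll M$. (For the direction I need: from $K+X=M$, reducing modulo $N$ and using $K/N\ll M/N$ gives $X+N=M$, whence $X=M$ because $N\ll M$.) Now fix a small submodule $N\ll M$ and let $K/N$ range over the non-zero small submodules of $M/N$, with $N\subsetneq K\subseteq M$. By the correspondence each such $K$ satisfies $K\ll M$, and the map $K\mapsto K/N$ is injective on submodules containing $N$. Thus the non-zero small submodules of $M/N$ correspond bijectively to a subset of the small submodules of $M$, and the latter are finite in number by $(1)$; so $M/N$ is an $fs$-module.

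The counting itself is routine; the only point demanding care is the quotient step, where smallness lifts along $M\to M/N$ precisely because $N$ is assumed small. This is exactly why $(3)$ is phrased for small quotients rather than for arbitrary quotients: for a general $N$ the submodules $K/N$ that are small in $M/N$ need not pull back to small submodules of $M$, and the finiteness transfer would break down. I therefore expect this dependence on the hypothesis $N\ll M$ to be the one genuinely load-bearing observation in the argument.
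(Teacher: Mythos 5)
Your proof is correct. The paper itself gives no argument for this proposition (it is labelled an ``easy access result'' and referred to \cite{ja-sh-4}), and your route---the two trivial implications back to $(1)$, plus the standard facts that smallness passes upward from a submodule to the ambient module and that $K/N \ll M/N$ with $N \ll M$ forces $K \ll M$, so that in each case the non-zero small submodules inject into the finite set of non-zero small submodules of $M$---is exactly the routine argument the authors intend.
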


  \begin{pro}\label{1-1}
  Let $M$  be an $R$-module. Then $M$ is an $fs$-module if and only if $Rad(M)$ has only finitely many submodules.
  \end{pro}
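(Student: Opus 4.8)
The plan is to exploit the defining property of the radical recorded in the introduction, namely that $Rad(M)$ equals the sum of all small submodules of $M$, together with the observation in the preceding Remark that a finite sum of small submodules is again small.

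First I would dispose of the easy implication. Suppose $Rad(M)$ has only finitely many submodules. Since every small submodule $N \ll M$ is contained in $Rad(M)$ (because $Rad(M)$ is the sum of all small submodules), each small submodule of $M$ is in particular a submodule of $Rad(M)$. Hence the collection of small submodules of $M$ embeds into the collection of submodules of $Rad(M)$, which is finite by hypothesis; thus $M$ has only finitely many small submodules, i.e. $M$ is an $fs$-module.

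For the converse, suppose $M$ is an $fs$-module, so that it has only finitely many small submodules, say $S_1, \dots, S_n$. Then $Rad(M) = S_1 + \cdots + S_n$ is a \emph{finite} sum of small submodules, whence $Rad(M) \ll M$ by the Remark. The key step is then to note that, because $Rad(M)$ is itself small, every submodule $N$ of $Rad(M)$ is small in $M$: indeed, a submodule of a small submodule is small, since $N + A = M$ with $A$ proper would force $Rad(M) + A = M$ with $A$ proper, contradicting $Rad(M) \ll M$. Consequently the submodules of $Rad(M)$ coincide exactly with the small submodules of $M$, of which there are only finitely many; hence $Rad(M)$ has only finitely many submodules.

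The only delicate point is verifying that $Rad(M) \ll M$ in the $fs$ case, which is exactly where the finiteness hypothesis is used, since it lets us express $Rad(M)$ as a finite sum of small submodules rather than an arbitrary one. Once $Rad(M) \ll M$ is established, the identification of its submodules with the small submodules of $M$ is routine, and the equivalence follows immediately.
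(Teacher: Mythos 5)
Your proof is correct and follows essentially the same route as the paper: the easy direction uses that $Rad(M)$ contains every small submodule, and the converse writes $Rad(M)$ as a (necessarily finite) sum of small submodules, concludes $Rad(M)\ll M$, and then observes that every submodule of a small submodule is small. You merely make explicit two steps the paper leaves implicit (the finiteness of the sum and the verification that submodules of a small submodule are small), which is a welcome clarification but not a different argument.
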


  \begin{proof}
    If $Rad(M)$ has only finitely many submodules, then $M$ necessarily is an $fs$-module, since $Rad(M)$ contains every small submodule of $M$. Conversely, let $M$ be an $fs$-module. Since $Rad(M)$ equals to the sum of all small submodules of $M$, it follows that $Rad(M)$ is small in $M$, so is every submodule that is contained in  $Rad(M)$. Hence,  $Rad(M)$ has only finitely many submodules.
 \end{proof}

 The following result is  also in \cite[Corollary 2.7]{ja-sh-4}.

 \begin{coro}\label{*'}
 If $M$ is an $fs$-module, then
    \begin{enumerate}
 \item $Rad(M)$ has finite length, so it is both Artinian and Noetherian.
 \item $M$ is Noetherian (Artinian) if and only if  $\frac{M}{Rad(M)}$ is Noetherian (Artinian).
    \end{enumerate}
\end{coro}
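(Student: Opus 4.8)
The plan is to derive both parts directly from Proposition \ref{1-1}, which already tells us that for an $fs$-module $M$ the submodule $Rad(M)$ has only finitely many submodules. The first observation I would record is that a module possessing only finitely many submodules is simultaneously Artinian and Noetherian: any ascending or descending chain of submodules can assume only finitely many distinct terms, so every such chain must stabilize. A module satisfying both chain conditions admits a finite composition series, and therefore has finite length. Applying this to $N = Rad(M)$ settles part (1), and in particular records that $Rad(M)$ is both Artinian and Noetherian.

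For part (2), I would invoke the standard behaviour of the chain conditions along the short exact sequence
\[
0 \longrightarrow Rad(M) \longrightarrow M \longrightarrow \frac{M}{Rad(M)} \longrightarrow 0,
\]
namely that $M$ is Noetherian (respectively Artinian) if and only if both the submodule $Rad(M)$ and the quotient $\frac{M}{Rad(M)}$ are Noetherian (respectively Artinian). Since part (1) guarantees that $Rad(M)$ has finite length, it is unconditionally both Noetherian and Artinian, so it contributes nothing to the equivalence. Consequently $M$ is Noetherian (Artinian) precisely when $\frac{M}{Rad(M)}$ is Noetherian (Artinian), which is the asserted equivalence.

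I do not expect a genuine obstacle here, since the proof reduces to two textbook facts: that finitely many submodules forces finite length, and that the chain conditions are both preserved and reflected along a short exact sequence. The only point deserving a moment of care is the very first implication, where one must observe that a finite bound on the \emph{number} of submodules simultaneously controls ascending and descending chains; once the finite length of $Rad(M)$ is in hand, the exact-sequence argument for part (2) is entirely routine.
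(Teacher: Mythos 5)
Your proposal is correct and follows the same route as the paper: part (1) is deduced from Proposition \ref{1-1} exactly as in the text, and your exact-sequence argument for part (2) is just the standard fact that the paper leaves implicit when it writes ``by part (1), it is evident.'' No issues.
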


\begin{proof}
 (1) By the previous proposition, $Rad(M)$ has only finitely many submodules,  thus  it has finite length and so it is both Artinian and Noetherian.\\
 (2) By part (1), it is evident.
   \end{proof}
    \begin{coro}\label{c1}
  Every local $fs$-module $M$ is both Noetherian and Artinian. In particular, every local and right $fs$-ring $R$ is both right Noetherian and right Artinian.
  \end{coro}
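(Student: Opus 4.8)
The plan is to reduce both assertions to Corollary~\ref{*'}(2), which says that for an $fs$-module the properties of being Noetherian and of being Artinian are each detected on the quotient $M/Rad(M)$. The key observation that makes this reduction trivial in the local case is that here $M/Rad(M)$ is \emph{simple}.

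First I would pin down the radical. Since $M$ is local, it has exactly one maximal submodule $N$, and $N$ contains every proper submodule of $M$. Because $Rad(M)$ is by definition the intersection of all maximal submodules and there is only one of them, we obtain $Rad(M) = N$. Consequently $M/Rad(M) = M/N$ is simple, as $N$ is maximal. A simple module has no nontrivial submodules whatsoever, so it is vacuously both Noetherian and Artinian.

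Next I would invoke Corollary~\ref{*'}(2): since $M$ is an $fs$-module, $M$ is Noetherian (respectively Artinian) if and only if $M/Rad(M)$ is Noetherian (respectively Artinian). Having shown that $M/Rad(M)$ is simple, and hence both, we conclude that $M$ is simultaneously Noetherian and Artinian, which settles the module statement.

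For the ring statement the only additional point is to see that a local ring $R$ turns $R_R$ into a local module. A local ring has a unique maximal right ideal, namely $J(R)$, which consists precisely of the non-units; since any proper right ideal avoids units, every proper right ideal is contained in $J(R)$. Thus $R_R$ has a unique maximal submodule containing all proper submodules, i.e.\ $R_R$ is a local module. If $R$ is moreover a right $fs$-ring, then $R_R$ is a local $fs$-module and the first part gives that $R_R$ is both Noetherian and Artinian, that is, $R$ is right Noetherian and right Artinian. There is no deep obstacle here; the only step demanding care is this final reduction---verifying that in a local ring every proper right ideal really lies inside $J(R)$---but this is immediate from the characterization of $J(R)$ as the set of non-units.
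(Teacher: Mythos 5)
Your proof is correct and follows exactly the paper's own route: identify $Rad(M)$ with the unique maximal submodule, observe that $M/Rad(M)$ is simple and hence both Noetherian and Artinian, and conclude via Corollary~\ref{*'}(2). The only difference is that you spell out the reduction of the ring statement to the module statement, which the paper leaves implicit.
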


 \begin{proof}
  Let $K$ be the unique maximal submodule of $M$, then $Rad(M)=K$ and so $\frac{M}{Rad(M)}= \frac{M}{K}$ is  simple. Hence, $\frac{M}{Rad(M)}$ is both   Noetherian and Artinian and by Corollary \ref{*'}, we  are done. 
 \end{proof}
 
  \begin{rem}
    If $S$ is a non-zero small submodule of $M$, then so is every non-zero submodule of $M$, contained in $S$. Hence, every minimal member with respect to inclusion, in the set of non-zero small submodules of $M$, necessarily is a minimal submodule of $M$. In other words, every minimal non-zero small submodule is a minimal submodule. From this, it follows that a  $\lq\lq$minimal non-zero small submodule" or equivalently a $\lq\lq$small minimal submodule", is precisely a non-zero submodule which is $\lq\lq$both  minimal and small submodule" of $M$.
   \end{rem}

  \begin{pro}\label{3-100}
  Let $M$ be an $fs$-module with $Rad(M) \ne 0$. Then $M$ has a minimal and small submodule, that is, 
  $Soc(Rad(M)) \ne 0$.
  \end{pro}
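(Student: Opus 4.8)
The plan is to exhibit the desired submodule as a minimal element of the finite collection of non-zero small submodules of $M$, and then to identify it, via the remark immediately preceding this proposition, as a genuine minimal submodule of $M$ lying inside $Rad(M)$.

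First I would check that the set $\Sigma$ of non-zero small submodules of $M$ is finite and non-empty. Finiteness is precisely the hypothesis that $M$ is an $fs$-module. For non-emptiness, recall that $Rad(M)$ is the sum of all small submodules of $M$; since $M$ is an $fs$-module this is a finite sum of small submodules and is therefore small in $M$, so the hypothesis $Rad(M)\ne 0$ guarantees that $Rad(M)$ itself is a non-zero small submodule, i.e. $Rad(M)\in\Sigma$. (Alternatively, Proposition \ref{1-1} already tells us $Rad(M)$ has only finitely many submodules, each of which is small, which again yields finiteness and non-emptiness of $\Sigma$.)

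Next, since $\Sigma$ is a non-empty finite poset under inclusion, it has a minimal element $N$. By the remark immediately preceding this proposition, a minimal member of the set of non-zero small submodules is automatically a minimal submodule of $M$; hence $N$ is simple and small in $M$. As every small submodule is contained in $Rad(M)$, we have $N\subseteq Rad(M)$, so $N$ is a simple submodule of $Rad(M)$ and therefore $0\ne N\subseteq Soc(Rad(M))$. This is exactly the assertion that $M$ has a minimal and small submodule and that $Soc(Rad(M))\ne 0$.

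I do not expect a genuine obstacle here: the whole argument is a short consequence of the $fs$-hypothesis together with the structural remark on minimal small submodules. The only point requiring a little care is the passage from \emph{minimal in $\Sigma$} to \emph{minimal in $M$}, which is precisely what the preceding remark supplies. If one prefers, the same conclusion follows even faster from Corollary \ref{*'}(1): there $Rad(M)$ is shown to have finite length, so being non-zero it is a non-zero Artinian module and hence has a simple (equivalently, minimal) submodule, which is automatically small since it sits inside the small submodule $Rad(M)$.
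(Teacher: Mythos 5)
Your proof is correct and follows essentially the same route as the paper: both take a minimal element of the finite set of non-zero small submodules, invoke the preceding remark to identify it as a minimal submodule of $M$, and note it lies in $Rad(M)$, hence in $Soc(Rad(M))$. Your explicit verification that this set is non-empty (via $Rad(M)$ being a finite sum of small submodules) is a small point the paper leaves implicit, and your alternative via Corollary \ref{*'}(1) is a valid shortcut.
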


  \begin{proof}
  The set of all non-zero small submodules of $M$ is finite, so it  has a minimal element, say $S$,  which is also a minimal submodule of $M$, by the above remark. Hence $0 \ne S \subseteq Rad(M) \cap Soc(M) = Soc(Rad(M))$, by  \cite[21.2(2)]{wis}.
 \end{proof}

  \begin{rem}\label{R3}
    For every $R$-module $M$, it follows from  \cite[Lemma 2.2(9)]{al-sh}, that $Soc(Rad(M) )$ is a small submodule of $M$. Hence, a semisimple submodule $S$ of $M$ is small in  $M$ if and only if  $ S \subseteq Rad(M)$.
  \end{rem}

  \begin{lem} \label{r5}
    Every minimal submodule of an $R$-module $M$ is either small or a direct summand. For this, let $S$ be a minimal submodule of $M$. If $S \subseteq Rad(M)$, then $S \subseteq Soc(Rad(M))$, so it is small by previous remark. If $S \nsubseteq Rad(M)$, then $S \nsubseteq K$  for some maximal submodule $K$ of $M$. Hence, $S \cap K =0$ and $S \oplus K = M$.
\end{lem}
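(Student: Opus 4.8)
The plan is to argue by a dichotomy on the position of the minimal submodule $S$ relative to $Rad(M)$: either $S\subseteq Rad(M)$, in which case I will show $S$ is small, or $S\nsubseteq Rad(M)$, in which case I will produce a maximal submodule complementing $S$ and exhibit a direct sum decomposition. Since $Rad(M)$ is by definition the intersection of all maximal submodules, this dichotomy is exhaustive, so the two cases together give the claim.

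First I would handle the case $S\subseteq Rad(M)$. Here the key observation is that a minimal submodule is simple, hence semisimple, so $S$ is a semisimple submodule lying inside $Rad(M)$. By Remark~\ref{R3}, a semisimple submodule of $M$ is small in $M$ precisely when it is contained in $Rad(M)$; applying this to $S$ immediately yields that $S\ll M$. (Equivalently, one notes $S\subseteq Soc(Rad(M))$, which is small in $M$ by the same remark, and small submodules are closed downward.)

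For the complementary case $S\nsubseteq Rad(M)$, I would first invoke the characterization of $Rad(M)$ as the intersection of all maximal submodules to extract a maximal submodule $K$ with $S\nsubseteq K$. Then $S\cap K$ is a proper submodule of $S$, so minimality of $S$ forces $S\cap K=0$. On the other hand, $S+K$ properly contains the maximal submodule $K$, whence $S+K=M$ by maximality. Combining these two facts gives $M=S\oplus K$, so $S$ is a direct summand.

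The genuinely substantive content is confined to the first case and rests entirely on Remark~\ref{R3}; the second case is a routine application of minimality and maximality. Thus the only point requiring care — which I would flag as the ``hard part'' only in the sense of being where the real input is used — is recognizing that the minimality of $S$ makes it semisimple and that Remark~\ref{R3} then converts containment in $Rad(M)$ into smallness. Once that is in hand, nothing else in the argument presents an obstacle.
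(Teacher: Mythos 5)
Your proof is correct and follows essentially the same route as the paper's: the same dichotomy on whether $S\subseteq Rad(M)$, the same appeal to Remark~\ref{R3} (via $S\subseteq Soc(Rad(M))$) in the first case, and the same minimality/maximality argument giving $S\oplus K=M$ in the second. No issues.
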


 Now, we give our structure theorem for $fs$-modules with finite Goldie dimension, see also  \cite[Theorem 2.20]{ja-sh-4}. 

\begin{theo}\label{3-18}
    Let $M$ be an $R$-module with finite Goldie dimension. Then $M$ is an $fs$-module if and only if $M = M_1 \oplus M_2$, where $M_1$ is semisimple  and $M_2$ is an $fs$-module such that $Soc(M_2) \ll M$.
\end{theo}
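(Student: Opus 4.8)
The plan is to treat the two implications separately, with essentially all of the content concentrated in the forward direction.

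For the converse, suppose $M = M_1 \oplus M_2$ with $M_1$ semisimple and $M_2$ an $fs$-module. Since $Rad$ commutes with finite direct sums and $Rad(M_1) = 0$, we get $Rad(M) = Rad(M_1) \oplus Rad(M_2) = Rad(M_2)$. Because $M_2$ is an $fs$-module, Proposition \ref{1-1} shows $Rad(M_2)$ has only finitely many submodules, hence so does $Rad(M)$, and Proposition \ref{1-1} applied again yields that $M$ is an $fs$-module. I would point out that neither the finite Goldie dimension of $M$ nor the hypothesis $Soc(M_2) \ll M$ is used here; the role of the semisimplicity of $M_1$ is exactly to annihilate its contribution to the radical, which is what rules out the (generally genuine) danger that a direct sum of two $fs$-modules need not be an $fs$-module.

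For the forward direction, assume $M$ is an $fs$-module of finite Goldie dimension. First I record that finite Goldie dimension forces $Soc(M)$ to have finite length, since an infinite-length semisimple submodule would contain an infinite independent family of nonzero submodules. As $Soc(M)$ is semisimple, its submodule $Soc(Rad(M)) = Soc(M) \cap Rad(M)$ is a direct summand of $Soc(M)$, and I set $\ell(M)$ to be the composition length of a complement of $Soc(Rad(M))$ in $Soc(M)$, a finite number by the previous remark. The target is a decomposition $M = M_1 \oplus M_2$ with $M_1$ semisimple, $M_2$ an $fs$-module, and $Soc(M_2) \subseteq Rad(M)$, the latter being equivalent to $Soc(M_2) \ll M$ by Remark \ref{R3}. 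I would prove this by induction on $\ell(M)$. If $\ell(M) = 0$ then $Soc(M) \subseteq Rad(M)$, and $M_1 = 0$, $M_2 = M$ works. If $\ell(M) \geq 1$ then $Soc(M) \nsubseteq Rad(M)$, so there is a simple submodule $T \subseteq M$ with $T \nsubseteq Rad(M)$; by Lemma \ref{r5}, $T$ is a direct summand, say $M = T \oplus N$. The crucial observation is that $Rad(M) = Rad(T) \oplus Rad(N) = Rad(N) \subseteq N$ (because $Rad(T) = 0$), so the small part $Soc(Rad(M))$ is unchanged and sits inside $N$. Since $Soc(M) = T \oplus Soc(N)$, the complement length drops by exactly one, i.e. $\ell(N) = \ell(M) - 1$, and $N$ is again an $fs$-module of finite Goldie dimension by Proposition \ref{3-10}. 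The inductive hypothesis gives $N = M_1' \oplus M_2$ with $M_1'$ semisimple, $M_2$ an $fs$-module, and $Soc(M_2) \subseteq Rad(N) = Rad(M)$; then $M = (T \oplus M_1') \oplus M_2$ is the required decomposition with $M_1 = T \oplus M_1'$.

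The main obstacle is precisely the direct-summand claim in the inductive step, and more sharply the need for a complement of the non-small socle that contains $Rad(M)$. It is tempting to try to split off the entire non-small part $M_1$ in one stroke by arguing that a semisimple submodule meeting $Rad(M)$ trivially is automatically a direct summand, but this is not clear in general. The induction sidesteps the difficulty by peeling off one simple summand at a time through Lemma \ref{r5}, the decisive point being that deleting a simple summand leaves the radical, and hence the small part of the socle, intact; this is what keeps the invariant $Soc(M_2) \subseteq Rad(M)$ valid all the way down to the base case.
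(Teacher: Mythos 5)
Your proof is correct, and the hard (forward) direction is essentially the paper's argument: both proofs repeatedly apply Lemma \ref{r5} to split off simple submodules not contained in $Rad(M)$ until only small socle remains. The difference is one of organization: the paper peels off minimal non-small summands $N_1, N_2, \dots$ and appeals to the finiteness of the set of minimal submodules to claim the process stops, whereas you run a clean induction on the length $\ell(M)$ of a complement of $Soc(Rad(M))$ in $Soc(M)$, with the key bookkeeping fact $Rad(M)=Rad(N)$ guaranteeing that $\ell$ drops by exactly one at each step. This makes the termination argument precise where the paper's is somewhat informal (the paper's write-up also contains index slips such as $K_2$ for $K_1$). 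For the converse the two proofs genuinely diverge: the paper invokes an external lemma to the effect that every small submodule of $M_1\oplus M_2$ with $M_1$ semisimple has the form $0\oplus S_2$ with $S_2\ll M_2$, while you compute $Rad(M)=Rad(M_2)$ and apply Proposition \ref{1-1} twice; your route stays entirely inside the paper's own toolkit (plus the standard fact that the radical commutes with finite direct sums) and, as you correctly observe, uses neither the finite Goldie dimension hypothesis nor $Soc(M_2)\ll M$.
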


\begin{proof}
    First of all, note that since $M$ has finite Goldie dimension, it follows that $M$ has finitely many minimal submodules. Let $M$ be an $fs$-module. If every minimal submodule of $M$ is small in $M$, then $Soc(M)$ is a finite direct sum of small submodules. It follows that $Soc(M) \ll M$, so in this case $M=0 \oplus M$ and we are done. In other case,  $M$ has some minimal submodules which are non-small,  let $N_1$  be a minimal and non-small submodules of $M$.  By Lemma \ref{r5} there exists a submodule $K_1$ of $M$ such that  $N_1 \oplus K_1 = M$. If $K_1$ has no minimal and non-small submodule, then $Soc(K_2) \ll M$. Set $M_1 = N_1$ and $M_2 = K_2$ and we are done. Otherwise $K_1 = N_2 \oplus K_2$, for some minimal and non-small submoule $N_2$ of $K_1$ and some submodule $K_2$ of $K_1$ and so $M = N_1 \oplus N_2 \oplus K_2$. 
    Since $Soc(M)$ has  finite Goldie dimension, so $M$ has finite number of minimal submodules, finally we have $M = N_1 \oplus N_2 \oplus \cdots \oplus N_m \oplus K_m $ such that $N_i$'s are minimal and non-small and all minimal submodules of $K_m$ are small, that is, $Soc(K_m) \ll M$.
 Let $M_1 = N_1 \oplus N_2 \oplus \cdots \oplus N_m $  and $M_2 = K_m$, then we are done. To prove the converse,  since $M_1$ is  semisimple, it follows from \cite[Lemma 2.2(6)]{al-sh} that every small submodule of  $M$ is in the  form of $0\oplus S_2$ such that    $ S_2\ll M_2$, and so    $M$ is an $fs$-module. 
 \end{proof}

\begin{defi}
  An $R$-module $M$ is called homogeneous, if every non-zero submodule of  $M$ has a non-zero small submodule.
\end{defi}

  The next result is devoted to homogeneous $fs$-modules.

 \begin{pro}
 Let $M$ be a homogeneous $fs$-module. Then $Soc(M)$ is essential in $M$.
 \end{pro}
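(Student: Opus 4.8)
The plan is to verify the essentiality condition directly: I would show that $Soc(M) \cap A \neq 0$ for every non-zero submodule $A$ of $M$, which is exactly the assertion $Soc(M) \subseteq_e M$. So first I would fix an arbitrary non-zero submodule $A \subseteq M$ and invoke the homogeneity hypothesis to produce a non-zero small submodule $S \subseteq A$.

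The key observation I would then use is that every non-zero submodule of $S$ is again a non-zero small submodule of $M$: if $T \subseteq S$ and $T + X = M$ for some submodule $X$, then $S + X = M$, whence $X = M$ by smallness of $S$. Since $M$ is an $fs$-module, there are only finitely many non-zero small submodules of $M$ in total, hence only finitely many contained in $S$, and this family is non-empty since it contains $S$ itself. I would therefore pick a member $T$ that is minimal with respect to inclusion in this finite family; minimality within the family of small submodules contained in $S$ in fact forces minimality in the whole family of non-zero small submodules of $M$, because any strictly smaller non-zero small submodule would again lie inside $S$. By the Remark preceding Proposition \ref{3-100}, such a minimal member of the set of non-zero small submodules is automatically a minimal submodule of $M$. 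Thus $0 \neq T \subseteq Soc(M)$, and since $T \subseteq S \subseteq A$ we obtain $0 \neq T \subseteq Soc(M) \cap A$, which completes the argument because $A$ was arbitrary.

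I do not anticipate a genuine obstacle, as the whole argument is driven by the finiteness coming from the $fs$-hypothesis together with the ``minimal small submodule $=$ minimal submodule'' remark. The only point that needs care is the reading of ``small submodule'' in the definition of homogeneous. Should it instead be meant relative to $A$ (i.e. $S \ll A$), the same conclusion follows by routing through Proposition \ref{3-100}: by Proposition \ref{3-10} the submodule $A$ is itself an $fs$-module, the existence of $S$ gives $Rad(A) \neq 0$, and then $Soc(Rad(A)) \neq 0$ supplies a minimal submodule of $A$ — equivalently a minimal submodule of $M$ — lying inside $A$ and again meeting $Soc(M)$ nontrivially.
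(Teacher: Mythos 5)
Your proposal is correct and in substance matches the paper's argument: the paper takes a non-zero submodule $N$, notes it is an $fs$-module by Proposition \ref{3-10}, and applies Proposition \ref{3-100} to obtain $0\neq Soc(N)\subseteq N\cap Soc(M)$ --- which is exactly the fallback route you sketch in your final paragraph. Your primary route merely inlines the same ``finitely many small submodules, so a minimal one exists and is simple'' argument at the level of $M$ rather than of $N$, so there is no genuinely different idea involved.
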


 \begin{proof}
 Let $N$ be a non-zero submodule of $M$. By Proposition \ref{3-10}, $N$ is an $fs$-module. So it follows from Proposition \ref{3-100} that  $0\neq Soc(N)\subseteq N\cap Soc(M)$, and hence we are done.
 \end{proof}
 We recall that an $R$-module $M$ is called finitely embedded if $Soc(M)$ is a finitely generated and essential submodule of $M$.

 \begin{coro}
   Let $M$ be a homogeneous $fs$-module. If $Soc(M)$ is finitely generated, then $M$ is finitely embedded.
 \end{coro}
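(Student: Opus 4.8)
The plan is to observe that this corollary is essentially a direct packaging of the immediately preceding proposition together with the added finiteness hypothesis. Recall that, by definition, $M$ is \emph{finitely embedded} precisely when $Soc(M)$ is both finitely generated and essential in $M$. Thus the corollary asks us to verify exactly these two conditions, one of which is already handed to us by assumption.

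First I would invoke the hypothesis directly: we are given that $Soc(M)$ is finitely generated, so half of the definition of finitely embedded is satisfied at the outset. The only remaining task is to establish that $Soc(M)$ is essential in $M$. For this I would appeal to the preceding proposition, which asserts that for any homogeneous $fs$-module $M$, the socle $Soc(M)$ is essential in $M$. Since $M$ is by assumption a homogeneous $fs$-module, that proposition applies verbatim and yields $Soc(M) \subseteq_e M$.

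Combining the two observations, $Soc(M)$ is simultaneously finitely generated (by hypothesis) and essential (by the preceding proposition), which is exactly the definition of $M$ being finitely embedded. There is no genuine obstacle here: the essentiality, which is the substantive content, has already been proved, and the finite-generation condition is assumed. The corollary is therefore immediate, and the proof amounts to a single sentence citing the previous proposition and recalling the definition of a finitely embedded module.
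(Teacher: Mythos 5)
Your proposal is correct and matches the paper's own proof, which likewise cites the preceding proposition (essentiality of $Soc(M)$ for a homogeneous $fs$-module) together with the definition of a finitely embedded module. Nothing further is needed.
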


 \begin{proof}
    This follows from  previous proposition and definition of finitely embedded modules.
 \end{proof}

 Recall that $R$ is called a duo ring, if every one-sided ideal of $R$ is an ideal of $R$. Also, a commutative ring  $R$ is locally Noetherian if  the localization $R_M$ is Noetherian for every maximal ideal $M$ of $R$.

  \begin{pro}
  Let $R$ be a locally Noetherian or a Noetherian duo ring and  $M$ be a homogeneous $fs$-module  for which $Soc(M)$ is finitely generated. Then $M$ is Artinian.
  \end{pro}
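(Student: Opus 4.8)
The plan is to reduce the statement to the classical fact that, over the rings in question, the injective hull of a simple module is Artinian. First I would invoke the preceding corollary: since $M$ is a homogeneous $fs$-module with $Soc(M)$ finitely generated, $M$ is finitely embedded, so $Soc(M)$ is a finitely generated essential submodule of $M$. Being finitely generated and semisimple, $Soc(M)$ decomposes as a finite direct sum $Soc(M) = S_1 \oplus \cdots \oplus S_n$ of simple submodules $S_i \cong R/P_i$ for suitable maximal (right) ideals $P_i$. Note that, after this first step, the homogeneous $fs$ hypothesis has been fully converted into the single datum of finite embedding; the remaining work depends only on the Noetherian-type structure of $R$.

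Next, using essentiality of $Soc(M)$ in $M$, the injective hull of $M$ coincides with that of its socle, giving an embedding $M \hookrightarrow E(Soc(M)) = E(S_1) \oplus \cdots \oplus E(S_n)$, where $E(-)$ denotes the injective hull. Thus it suffices to show that each $E(S_i)$ is Artinian; for then a finite direct sum of Artinian modules is Artinian, and any submodule of an Artinian module, in particular $M$, is Artinian. In effect this will establish that over these rings a module is Artinian precisely when it is finitely embedded.

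The crux is therefore the Matlis-type statement that the injective hull of a simple module over such a ring is Artinian. For a commutative Noetherian ring this is Matlis's theorem. For a commutative locally Noetherian ring I would reduce to the local case by observing that each $E(S_i)$ is a torsion module concentrated at $P_i$, so every element of $R \setminus P_i$ acts bijectively on it; hence $E(S_i)$ is naturally a module over the Noetherian local ring $R_{P_i}$ with exactly the same lattice of submodules, and there it is Artinian by Matlis, whence it is Artinian as an $R$-module. For a Noetherian duo ring I would run the analogous localization at the two-sided maximal ideal $P_i$, which is legitimate precisely because in a duo ring every one-sided ideal is two-sided, so the relevant multiplicative localization exists and behaves as in the commutative case.

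I expect this last step, the non-commutative duo case, to be the main obstacle: it requires the appropriate non-commutative analogue of Matlis's structure theory for injective hulls of simple modules, rather than a direct computation, and one must verify that the duo hypothesis indeed supplies a well-behaved localization $R_{P_i}$ over which $E(S_i)$ becomes an Artinian module with unchanged submodule lattice. Once the injective-hull-Artinian fact is in hand in each of the three cases, the conclusion that $M$ is Artinian follows immediately from the embedding $M \hookrightarrow E(S_1) \oplus \cdots \oplus E(S_n)$.
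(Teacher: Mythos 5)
Your proposal follows the same route as the paper: first use the preceding corollary to conclude that $M$ is finitely embedded, then invoke the fact that over a locally Noetherian or Noetherian duo ring every finitely embedded module is Artinian. The paper disposes of that second step by citation alone (Vamos, \emph{The dual of the notion of finitely generated}, Theorem~2, and Hashemi--Karamzadeh--Shirali, Theorem~2.4), whereas you attempt to reprove it via the embedding $M \hookrightarrow E(S_1)\oplus\cdots\oplus E(S_n)$ and Matlis theory; your argument is sound in the commutative Noetherian and locally Noetherian cases. For the Noetherian duo case you correctly flag that a noncommutative analogue of Matlis's theorem and a well-behaved localization are needed but you do not supply them; that missing piece is exactly the content of the cited Theorem~2.4 of Hashemi--Karamzadeh--Shirali, so the honest options are to cite it (as the paper does) or to carry out the Ore-localization argument in full, which your sketch does not yet do.
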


  \begin{proof}
  If $R$ is locally Noetherian or a Noetherian duo ring, then every finitely embedded module is Artinian, see \cite [Theorem 2]{va1} and  \cite [Theorem 2.4]{kh}.
  \end{proof}

   We cite the following important fact from  \cite[Corollary 1.10]{sa-va}.

 \begin{theo}\label{3-1}
An  $R$-module $M$ with   $Rad(M)=0$ has finite hollow dimension if and only if it is  finitely generated  semisimple.
 \end{theo}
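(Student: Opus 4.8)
The plan is to prove the two implications separately, the reverse being routine and the forward one carrying all the weight. For the easy direction, suppose $M$ is finitely generated and semisimple, say $M = S_1 \oplus \cdots \oplus S_n$ with each $S_i$ simple. Each $S_i$ is hollow, since its only proper submodule is $0$, which is trivially small, and a finite direct sum of hollow modules has finite hollow dimension (at most $n$); hence $\hdim\,M < \infty$. This is consistent with the standing hypothesis, as $Rad(M) = \bigoplus_i Rad(S_i) = 0$. So this direction requires no real work.

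For the forward direction, assume $Rad(M) = 0$ and $\hdim\,M < \infty$. First I would record that $Rad(M) = 0$ forces the only small submodule of $M$ to be $0$, since every small submodule is contained in the sum $Rad(M)$ of all small submodules. Next I would invoke the structural description of a module of finite hollow dimension: there is an epimorphism $p \colon M \to H_1 \oplus \cdots \oplus H_n$ onto a finite direct sum of nonzero hollow modules whose kernel is small in $M$. Because the only small submodule of $M$ is $0$, we get $\ker p = 0$, so $p$ is an isomorphism and $M \cong H_1 \oplus \cdots \oplus H_n$.

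It then remains to identify the summands. Since the radical of a finite direct sum is the direct sum of the radicals, $0 = Rad(M) = \bigoplus_i Rad(H_i)$ gives $Rad(H_i) = 0$ for every $i$. But a nonzero hollow module $H$ with $Rad(H) = 0$ must be simple: if $H$ possessed a maximal submodule $K$, then $K$ would be small (as $H$ is hollow) and contained in $Rad(H) = 0$, forcing $H$ simple; and $H$ cannot fail to have a maximal submodule, for then $Rad(H) = H \neq 0$. Thus each $H_i$ is simple and $M$ is a finite direct sum of simple modules, i.e.\ finitely generated semisimple, as required.

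The main obstacle is the second step of the forward direction, namely the small-cover decomposition of a finite-hollow-dimensional module into hollow summands; everything after it is formal. If one prefers to argue directly from the chain-condition definition of $\hdim$, I would instead extract from the maximal submodules of $M$ (whose intersection is $Rad(M) = 0$) a coindependent family $K_1, \ldots, K_m$ that is maximal with respect to coindependence, necessarily finite by finiteness of $\hdim\,M$; a standard enlargement lemma for coindependent families together with $Rad(M) = 0$ then forces $\bigcap_i K_i = 0$, whereupon the canonical map $M \to \bigoplus_{i=1}^m M/K_i$ is an isomorphism by the Chinese-remainder argument, exhibiting $M$ as a finite direct sum of the simple modules $M/K_i$. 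The delicate point in this alternative is verifying that the family can be enlarged while remaining coindependent, which is precisely where finiteness of $\hdim$ enters.
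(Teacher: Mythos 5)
Your argument is correct, but note that the paper does not prove this statement at all: Theorem \ref{3-1} is quoted verbatim from \cite[Corollary 1.10]{sa-va}, so there is no internal proof to compare against. What you have written is essentially a reconstruction of the standard proof from the dual Goldie dimension literature (Sarath--Varadarajan, and Lomp's thesis \cite{Lomp1}). The easy direction is fine. In the forward direction, the one load-bearing ingredient is exactly the one you flag: the existence, for a module of finite hollow dimension $n$, of a small epimorphism $p\colon M\to H_1\oplus\cdots\oplus H_n$ onto nonzero hollow modules (equivalently, a coindependent family $K_1,\dots,K_n$ with $\bigcap_i K_i\ll M$ and each $M/K_i$ hollow). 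That is a genuine theorem, not a formality, but it is standard and is available in the very sources the paper cites; granting it, your deduction is airtight --- indeed it can be shortened: a nonzero hollow $H$ has \emph{every} proper submodule small, so $Rad(H)=0$ forces every proper submodule to be zero, i.e.\ $H$ is simple, without any detour through maximal submodules. Your alternative chain-condition argument via maximal coindependent families of maximal submodules also works, and you correctly identify the enlargement step as the delicate point; it is the same technical content as the small-cover decomposition in different clothing. In short: a correct, self-contained proof of a result the paper only cites.
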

 The following result is also in \cite[Theorem 2.13]{ja-sh-4}. 
 
  \begin{theo}\label{3-6}
    Let $M$ be an $fs$-module with finite hollow dimension over a ring $R$. The following holds.
    \begin{enumerate}
        \item $M$ is Artinian.
        \item $M$ is  Noetherian.
        \item $Rad(M)$ is finitely generated.
        \item $M$ has finite Goldie dimension.
        \item $M$ has a finite composition series.
        \item $M$ has finite length.
   \end{enumerate}
  \end{theo}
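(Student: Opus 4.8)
The plan is to reduce the whole statement to Theorem \ref{3-1} applied to the quotient $M/Rad(M)$. First I would record the enabling facts. Since $M$ is an $fs$-module, $Rad(M)$ is a finite sum of small submodules and hence itself small in $M$; moreover, by Corollary \ref{*'}(1) it has finite length and is in particular finitely generated, which already yields part (3). Because $Rad(M)$ is contained in every maximal submodule, the maximal submodules of $M/Rad(M)$ correspond exactly to those of $M$, so $Rad\bigl(M/Rad(M)\bigr) = Rad(M)/Rad(M) = 0$.

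The one genuine verification is that $M/Rad(M)$ again has finite hollow dimension, and I would argue this directly from the chain definition in the introduction. Submodules of $M/Rad(M)$ correspond to submodules of $M$ containing $Rad(M)$, so any descending chain $\overline{N_1} \supseteq \overline{N_2} \supseteq \cdots$ in $M/Rad(M)$ lifts to a descending chain $N_1 \supseteq N_2 \supseteq \cdots$ in $M$. Finite hollow dimension of $M$ supplies an index $n$ with $N_n/N_k \ll M/N_k$ for all $k \ge n$; passing through the canonical isomorphisms $N_n/N_k \cong \overline{N_n}/\overline{N_k}$ and $M/N_k \cong (M/Rad(M))/\overline{N_k}$ transfers this smallness to $M/Rad(M)$, so $\hdim\bigl(M/Rad(M)\bigr)$ is finite. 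This chain-lifting is the step I expect to require the most care, since everything else is a direct appeal to the cited results.

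Now Theorem \ref{3-1} applies to $M/Rad(M)$: it is finitely generated and semisimple, hence both Noetherian and Artinian. By Corollary \ref{*'}(2) these properties transfer back to $M$, giving parts (1) and (2). The remaining parts are then formal: a module that is simultaneously Noetherian and Artinian admits a finite composition series, which is parts (5) and (6); and finite length makes $Soc(M)$ a finite essential direct sum of simple submodules, so $M$ has finite Goldie dimension, settling part (4).
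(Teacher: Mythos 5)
Your proposal is correct and follows essentially the same route as the paper: apply Theorem \ref{3-1} to $M/Rad(M)$ (which has zero radical and inherits finite hollow dimension) to get that it is finitely generated semisimple, then pull Noetherian and Artinian back to $M$ via Corollary \ref{*'}(2), with the remaining parts following formally. The only difference is that you explicitly verify the chain-lifting argument showing $M/Rad(M)$ has finite hollow dimension, a step the paper's proof takes for granted.
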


  \begin{proof}
  It follows from  previous theorem that $\frac{M}{Rad(M)}$ is both Artinian and Noetherian. Then so is $M$,  by the part $(2)$ of
   Corollary \ref{*'}. It follows that $Rad(M)$ is finitely generated and $M$ has finite Goldie dimension. Moreover, such a module $M$ has a finite composition series, so it has finite length. 
   
   \end{proof}

   \begin{coro}\label{3-666}
    Let $M$ be an $fs$-module. Then $M$ is Artinian if and only if it has finite hollow dimension.
   \end{coro}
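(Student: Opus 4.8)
The plan is to prove the two implications separately, since one direction has already been established in the preceding theorem and the other is a soft fact about Artinian modules that does not even require the $fs$-hypothesis.

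First I would dispose of the implication ``finite hollow dimension $\Rightarrow$ Artinian''. This requires no new work at all: it is precisely part $(1)$ of Theorem \ref{3-6}, which asserts that every $fs$-module with finite hollow dimension is Artinian. So here I would simply invoke Theorem \ref{3-6}(1), and this is the only place where the $fs$-hypothesis is genuinely used.

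For the converse, ``Artinian $\Rightarrow$ finite hollow dimension'', I would argue directly from the definition of finite hollow dimension given in the introduction. Let $N_{1}\supseteq N_{2}\supseteq \cdots$ be any descending chain of submodules of $M$. Since $M$ is Artinian, the descending chain condition forces the chain to stabilize, so there is an integer $n \geq 1$ with $N_n = N_k$ for all $k \geq n$. Consequently $\frac{N_n}{N_k} = 0$ for all such $k$, and because the zero submodule is trivially small in any module, $\frac{N_n}{N_k} \ll \frac{M}{N_k}$ for all $k \geq n$. This is exactly the defining condition for $M$ to have finite hollow dimension, confirming the remark made in the introduction that every Artinian module has finite hollow dimension.

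I do not expect any real obstacle here: the entire substance of the corollary is carried by Theorem \ref{3-6}(1), which furnishes the non-trivial direction, while the reverse implication is the routine observation that an Artinian module satisfies the hollow-dimension condition vacuously through the descending chain condition. Thus the proof would consist of one citation followed by a two-line stabilization argument.
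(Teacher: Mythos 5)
Your proposal is correct and follows the same route as the paper: the paper likewise dispatches ``Artinian $\Rightarrow$ finite hollow dimension'' as clear and cites Theorem \ref{3-6} for the converse. The only difference is that you spell out the stabilization argument that the paper leaves as ``clearly,'' which is a harmless (and valid) elaboration under the definition of hollow dimension given in the introduction.
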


   \begin{proof}
   Every Artinian module clearly has finite hollow dimension. The converse is true by the previous theorem.
   \end{proof}

   \begin{pro}
   Let $R$ be a semiprime ring. The following statements are equivalent.
   \begin{enumerate}
    \item $R$ is a right $fs$-ring with finite right hollow dimension.
    \item $R$ is a left $fs$-ring with finite left hollow dimension.
    \item $R$ is a semisimple ring.\\
    Moreover, if $R$ is local,  then these are equivalent to  \item $R$ is a division ring.
\end{enumerate}
  \end{pro}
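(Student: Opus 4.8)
The plan is to funnel both dimension hypotheses into the single classical conclusion ``$R$ is semisimple'', using Theorem~\ref{3-6} to extract the Artinian property and the semiprime hypothesis to annihilate the radical. Throughout I would use that semisimplicity is a left--right symmetric notion, that $J(R)=Rad(R_R)=Rad(_RR)$, and the Wedderburn-type fact that a right (or left) Artinian ring is semisimple precisely when it is semiprime.

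First I would prove $(1)\Rightarrow(3)$. Reading $(1)$ as a statement about the module $R_R$, it says $R_R$ is an $fs$-module of finite hollow dimension; Theorem~\ref{3-6} then forces $R_R$ to be Artinian, i.e.\ $R$ is right Artinian. In any right Artinian ring $J(R)$ is a nilpotent ideal, and since $R$ is semiprime it has no non-zero nilpotent ideal, so $J(R)=0$; a right Artinian ring with zero radical is semisimple, giving $(3)$. The implication $(2)\Rightarrow(3)$ is the verbatim left-hand version, applied to $_RR$ and using $J(R)=Rad(_RR)$.

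For the reverse implications I would treat $(3)\Rightarrow(1)$ and $(3)\Rightarrow(2)$ at once. If $R$ is semisimple then $R_R$ is a finite direct sum of simple submodules, hence of finite length and so of finite hollow dimension; moreover $Rad(R_R)=J(R)=0$, so by the first part of Remark~2.2 the module $R_R$ is an $fs$-module. Thus $R$ is a right $fs$-ring of finite right hollow dimension, which is $(1)$, and the identical argument for $_RR$ yields $(2)$; this closes the cycle $(1)\Leftrightarrow(2)\Leftrightarrow(3)$. Finally, assuming in addition that $R$ is local, I would prove $(3)\Leftrightarrow(4)$: if $R$ is semisimple and local then $J(R)=0$ makes the unique maximal right ideal equal to $0$, so $0$ and $R$ are the only right ideals and every non-zero element is a unit, whence $R$ is a division ring; conversely a division ring is simple Artinian, hence semisimple, so $(4)\Rightarrow(3)$ is immediate.

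The only genuinely non-routine input is the step $(1)\Rightarrow(3)$, and even there the work is done by two standard facts: nilpotence of the radical in a right Artinian ring, and the equivalence of semiprimeness with semisimplicity for Artinian rings. Consequently the main care lies in correctly invoking Theorem~\ref{3-6} to secure the Artinian property and in observing that semiprimeness is exactly the hypothesis needed to remove the radical; the remaining implications and the local refinement are formal.
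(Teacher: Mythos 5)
Your proposal is correct and follows essentially the same route as the paper: extract the Artinian property from the $fs$ plus finite hollow dimension hypotheses (the paper uses Corollary~\ref{3-666}, you use Theorem~\ref{3-6}, which is the same content), then use semiprimeness to kill $J(R)$ and conclude semisimplicity, with the converse and the local refinement handled exactly as you describe. The only cosmetic difference is that the paper compresses the Wedderburn-type step into a citation of \cite[Corollary 3.17]{goo}, whereas you unpack it via nilpotence of the radical in an Artinian ring.
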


  \begin{proof}
    By  previous corollary, $(1)$ is equivalent to $R$ is right Artinian and $(2)$ is equivalent to $R$ is left Artinian. Since $R$ is semiprime, these are equivalent to $R$ is semisimple, see \cite[Corollary 3.17]{goo}. In case $R$ is local, then it is a semisimple local ring. Equivalently, $J(R) = 0$  is the unique maximal right (left) ideal of $R$, that is, $R$ is a division ring.
  \end{proof}

 \begin{pro}
 Let $R$ be a ring. Then at the same time, $R[x]$ cannot have finite right (left) hollow dimension and to be a right (left) $fs$-ring.
 \end{pro}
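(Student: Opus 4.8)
The plan is to argue by contradiction, reducing everything to the classical fact that a polynomial ring over a non-zero ring is never Artinian. So suppose, toward a contradiction, that $R[x]$ is at once a right $fs$-ring and has finite right hollow dimension. Regarding $R[x]$ as a module over itself, this means precisely that $R[x]$ is an $fs$-module of finite hollow dimension (as a right $R[x]$-module). By Corollary \ref{3-666}, every such module is Artinian, so $R[x]$ would be a right Artinian ring.

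I would then contradict right Artinianness by exhibiting a non-terminating strictly descending chain of right ideals, the obvious candidate being
\[
xR[x] \supseteq x^2R[x] \supseteq x^3R[x] \supseteq \cdots .
\]
The crux is to check that each containment is proper. Because $R$ has a non-zero identity, the monomial $x^n$ belongs to $x^nR[x]$, whereas every element of $x^{n+1}R[x]$ has all of its non-zero terms of degree at least $n+1$; comparing lowest-degree terms gives $x^n \notin x^{n+1}R[x]$. Hence the chain is strictly decreasing and never stabilizes, which is the desired contradiction.

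For the left-sided assertion I would repeat the argument verbatim, now viewing $R[x]$ as a left module over itself and using the chain of left ideals $R[x]x^n$ (which in fact coincides with $x^nR[x]$, since $x$ is central), invoking the left-module version of Corollary \ref{3-666}. The only step requiring any real care is the strictness of the descending chain, and this is exactly where the standing hypothesis that the identity of $R$ (equivalently, $R$ itself) is non-zero enters; the remainder is a direct application of Corollary \ref{3-666}.
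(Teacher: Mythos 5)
Your proposal is correct and follows essentially the same route as the paper: both reduce the claim via Corollary \ref{3-666} to showing $R[x]$ is not right (left) Artinian, and both witness this with the strictly descending chain of powers of $(x)$. Your only addition is the explicit lowest-degree-term check that the chain is strict, which the paper leaves implicit.
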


 \begin{proof}
  Since $(x) \supsetneq (x^2) \supsetneq (x^3) \supsetneq \cdots $ is an infinite descending chain of ideals in $R[x]$, it follows that $R[x]$ is not right (nor left) Artinian.  By Corollary \ref{3-666}, we are done.
 \end{proof}

 \begin{defi}
    An $R$-module $M$  is said to be an $AB5^*$ module, if for every submodule $B$ and inverse system $\{A_{i}\}_{i\in I}$ of submodules of $M$,
    $$B +\bigcap_{i\in I}A_{i}=\bigcap_{i\in I}(B+A_{i}).$$
 \end{defi}
 Artinian modules and linearly compact modules are  $AB5^{*}$, see \cite[29.8]{wis}. We also recall that an $R$-module $M$ is called $q.f.d$ (i.e., quotient finite  dimentional) if every factor module of $M$ has finite Goldie dimension.

  \begin{lem}
    Let $M$ be an $AB5^*$  $fs$-module. If $M$ has Krull dimension, then $M$ is both Noetherian and Artinian.
  \end{lem}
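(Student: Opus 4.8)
The plan is to show that the single conclusion \emph{$M$ has finite hollow dimension} already suffices, and then to quote the earlier machinery. Indeed, once $\hdim\,M<\infty$ is established, Theorem \ref{3-6} applied to the $fs$-module $M$ yields at once that $M$ is both Artinian and Noetherian (in fact of finite length). So the whole problem is to convert the two hypotheses ``$AB5^{*}$'' and ``$M$ has Krull dimension'' into finiteness of $\hdim\,M$. A convenient alternative packaging, if one prefers to lean on Theorem \ref{3-1} rather than Theorem \ref{3-6}, is to first reduce to the top factor: since $M$ is an $fs$-module, $Rad(M)$ is small and of finite length, so by Corollary \ref{*'}(2) it is enough to prove that $\overline{M}=M/Rad(M)$ is Artinian and Noetherian. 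Here $Rad(\overline{M})=0$, and $\overline{M}$ inherits both $AB5^{*}$ and Krull dimension from $M$ (both properties pass to quotients). If $\overline{M}$ turns out to have finite hollow dimension, then Theorem \ref{3-1} forces it to be finitely generated semisimple, hence Artinian and Noetherian, and we are done.

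For the bridge I would use the classical fact that a module with Krull dimension has finite Goldie dimension, so $M$ is \emph{q.f.d.} This, however, is by itself \emph{not} enough: the $\mathbf{Z}$-module $\mathbf{Z}$ is an $fs$-module (its radical is $0$) with $\kdim\,\mathbf{Z}=1$ which is not Artinian, and it is precisely the failure of $AB5^{*}$ for $\mathbf{Z}$ that lets it escape. The role of the $AB5^{*}$ hypothesis is exactly to supply the missing self-duality: the submodule lattice of any module is upper continuous, and $AB5^{*}$ is the dual assertion that it is also \emph{lower} continuous, i.e. $B+\bigcap_{i}A_{i}=\bigcap_{i}(B+A_{i})$ for every inverse system. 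Working in this two-sidedly continuous lattice, the existence of Krull dimension can be dualized to the existence of Noetherian (dual Krull) dimension, and a module possessing Noetherian dimension has finite hollow dimension (the dual of the Gordon--Robson bound). This produces $\hdim\,M<\infty$ and closes the argument through Theorem \ref{3-6}; for the lattice-theoretic duality and the Krull/Noetherian-dimension facts I would cite \cite{al-sm,ch,le2,wis}.

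The main obstacle is this single duality step: ``$AB5^{*}$ together with Krull dimension forces Noetherian dimension, equivalently finite hollow dimension.'' Everything on either side of it is routine, for the reduction to $\overline{M}$ is pure bookkeeping with Corollary \ref{*'}, and the final passage from finite hollow dimension to Artinian and Noetherian is exactly Theorem \ref{3-6} (or Theorem \ref{3-1}). I would therefore spend essentially all of the effort justifying why $AB5^{*}$ upgrades Krull dimension to dual Krull dimension, taking care that $\overline{M}$ (or $M$ itself) genuinely retains $AB5^{*}$ under the reductions used, since it is this hypothesis alone that rules out the $\mathbf{Z}$-type examples.
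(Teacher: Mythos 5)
Your overall strategy coincides with the paper's: both arguments funnel everything into producing $\hdim\,M<\infty$ and then invoke Theorem \ref{3-6} (your detour through $\overline{M}=M/Rad(M)$ and Theorem \ref{3-1} is harmless but unnecessary, since Theorem \ref{3-6} applies to $M$ directly). Where you differ is in the bridge. The paper's bridge is a two-line citation: a module with Krull dimension is q.f.d.\ (every factor again has Krull dimension, hence finite Goldie dimension), and by \cite[Proposition 1.3]{le2} an $AB5^{*}$ module is q.f.d.\ if and only if every submodule of it has finite hollow dimension; in particular $\hdim\,M<\infty$. You instead propose to pass through Noetherian dimension: $AB5^{*}$ plus Krull dimension gives Noetherian dimension, which gives finite hollow dimension. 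This route can be made to work, but note two things. First, you have slightly misallocated where $AB5^{*}$ is needed: by Lemonnier's self-dual characterization of deviation (a modular lattice has deviation iff it has codeviation iff it contains no dense subchain), the implication ``Krull dimension implies Noetherian dimension'' holds for \emph{every} module, with no continuity hypothesis; it is only the second half, ``Noetherian dimension implies finite hollow dimension,'' that genuinely requires $AB5^{*}$ (lower continuity of the submodule lattice, so that the Gordon--Robson bound can be applied in the opposite lattice) --- exactly as your $\mathbb{Z}$ example shows. Second, that half is precisely the step you leave unproved, announcing only that you would ``spend essentially all of the effort'' on it; the paper discharges it in one stroke with Lemonnier's Proposition 1.3, and you would do better to quote that result (together with Krull dimension $\Rightarrow$ q.f.d.) than to rebuild the lattice duality from scratch.
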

  \begin{proof}
  By \cite[Proposition 1.3]{le2},  $M$ is  $q.f.d$ if and only if  every submodule of $M$ has finite hollow dimension. Hence, if $M$ is both an $fs$-module and a $q.f.d$-module, then by Theorem \ref{3-6}, it is both Noetherian and Artinian.
  \end{proof}

  \begin{exam}\label{0123}
    For an arbitrary $R$-module $M$, to be an $fs$-module and to have finite hollow dimension are independent. For this, we give some examples.
    \begin{enumerate}
        \item  For an $fs$-module with finite hollow dimension, we refer to every finite $R$-module $M$ with $Rad(M)=0$.

        \item For an $fs$-module with infinite hollow dimension, we refer to $\mathbb Z$, the ring of integers as  $\mathbb Z$-module.
        \item For a  non-$fs$-module with finite hollow dimension, we refer to $\mathbb Z_{p^{\infty}}$ as  $\mathbb Z$-module, where $p$ is a prime number.

        \item For a non-$fs$-module with infinite hollow dimension, we refer to  $\mathbb Q$, the set of rational numbers  as  $\mathbb Z$-module. 
    \end{enumerate}
  \end{exam}
 \begin{rem} 
  By  the part $4$ of above example we infer that a proper  essential extension of an $fs$-module need not to be an $fs$-module.
 \end{rem}
 
  \begin{exam}
    For each $n \in \mathbb N$, there exists a non-$fs$-module $M$ such that  $\ndim\,M = n$.
    For this, let $F$ be a field and $R=F[x_1,x_2,...,x_n]$. Then  $R$ is a commutative Noetherian ring and every maximal ideal $M$ of $R$, is
    exactly of  rank $n$, that is, there exists a chain of  length $n$  of prime ideals descending from $M$, but no longer chain. Now, let
    $S$ be a simple $R$-module and  $A = E(S)$ be the injective envelope of $S$. By \cite[Theorem 2]{va1}, $A$ is Artinian and by \cite[Poroposion 5]{ch}, $\ndim\, A = Rank(M) = n > 1$, where $M$ is a maximal ideal of $R$ such that $S \cong \frac{R}{M}$. This implies that $A$ is not an $fs$-module, since by Corollary \ref{3-666}, for every  $fs$-module with finite hollow dimension, must be  Noetherian. This example, also shows that there exist Artinian modules with Noetherian dimension of any natural number.
   \end{exam}

  \begin{defi}
    An $R$-module $M$ is called a $us$-module, if it has a unique non-zero small submodule. $R$ is called a right $us$-ring, if as an $R$-module it is a $us$-module.
 \end{defi}

 The following well-known fact is due to Brauer.

\begin{theo}\label{Br}
    If $A$ is a minimal right ideal of a ring $R$, then either $A^2=0$ or $A=eR$  for some idempotent $e \in R$.
\end{theo}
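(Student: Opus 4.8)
The plan is to invoke the minimality of $A$ twice: once to set up a dichotomy on the square $A^2$, and once to extract an idempotent generator. First I would note that $A^2 = \{\sum_i a_i b_i : a_i, b_i \in A\}$ is again a right ideal of $R$ contained in $A$, since for $x = \sum_i a_i b_i$ and $r \in R$ we have $xr = \sum_i a_i(b_i r) \in A^2$ because $A$ is a right ideal. Minimality of $A$ then forces $A^2 = 0$ or $A^2 = A$. The first alternative is exactly the first conclusion of the statement, so all the work lies in the case $A^2 = A \ne 0$.

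Assuming $A^2 = A \ne 0$, I would write $A = A^2 = \sum_{a \in A} aA$ and deduce that $aA \ne 0$ for some $a \in A$; in particular $a \ne 0$. Since $aA$ is a nonzero right ideal contained in $A$, minimality gives $aA = A$. Because $a \in A = aA$, there is an element $e \in A$ with $ae = a$, and this $e$ is my candidate for the idempotent generator. The rest of the argument is devoted to showing that $e$ is indeed idempotent and that $eR = A$.

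For idempotency I would introduce the right annihilator $r_A(a) = \{x \in A : ax = 0\}$, which is once more a right ideal contained in $A$ (if $ax = 0$ then $a(xr) = (ax)r = 0$). Since $ae = a \ne 0$, the element $e$ does not lie in $r_A(a)$, so $r_A(a) \ne A$, and minimality collapses it to $r_A(a) = 0$. From $ae^2 = (ae)e = ae = a$ I then get $a(e^2 - e) = 0$, so $e^2 - e \in r_A(a) = 0$ and hence $e^2 = e$. Finally, $e \in A$ and $A$ a right ideal give $eR \subseteq A$, while $e \ne 0$ (as $ae = a \ne 0$) and the presence of the identity force $e \in eR$, so $eR \ne 0$; minimality then yields $eR = A$.

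The step requiring genuine care, rather than routine verification, is the introduction of the annihilator $r_A(a)$ and the observation that $e \notin r_A(a)$, which is precisely what places us in the branch $r_A(a) = 0$ and thereby delivers the idempotent. Everything else is a direct application of the right-ideal axioms combined with the minimality dichotomy, and I expect no further obstruction.
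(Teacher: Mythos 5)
Your proof is correct and is the standard argument for Brauer's lemma: the dichotomy $A^2=0$ or $A^2=A$, the passage to some $a$ with $aA=A$, the annihilator $r_A(a)$ collapsing to $0$ by minimality to give $e^2=e$, and a final minimality step for $eR=A$ all check out. Note that the paper states this result without proof, citing it as a well-known fact due to Brauer, so there is no in-paper argument to compare against; your write-up would serve as a complete self-contained proof.
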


\begin{theo}\label{0}
The following statements are equivalent for any ring $R$.

\begin{enumerate}
\item $R$ is a right $us$-ring.
\item $J=J(R)$ is minimal as a  right ideal of $R$ and $J^2 =0$.
\item Each right ideal $A$ of $R$ is either minimal or non-small.
\end{enumerate}
\end{theo}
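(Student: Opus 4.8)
The plan is to prove the three conditions equivalent cyclically, $(1)\Rightarrow(2)\Rightarrow(3)\Rightarrow(1)$, resting on four standard facts about small right ideals that are already available in this section: that $J=J(R)=Rad(R_R)$ is the sum of all small right ideals of $R$, and is itself a small right ideal since $R_R$ is finitely generated; that every nonzero right subideal of a small right ideal is again small, and a finite sum of small right ideals is small; that a nonzero small right ideal is never a direct summand of $R_R$; and Brauer's dichotomy, Theorem \ref{Br}.

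For $(1)\Rightarrow(2)$, let $S$ be the unique nonzero small right ideal of $R$. Because $J$ is the sum of all small right ideals and $0,S$ are the only small right ideals, I get $J=S\neq 0$. Any nonzero right ideal $A\subseteq J=S$ is small, hence $A=S$ by uniqueness, so $J$ is a minimal right ideal. To obtain $J^2=0$ I would invoke Brauer: either $J^2=0$, or $J=eR$ for some idempotent $e$. In the second case $e\neq 0$ (as $J\neq0$), so $eR$ is a nonzero direct summand with proper complement $(1-e)R$; then $eR+(1-e)R=R$ contradicts the smallness of $J$. Hence $J^2=0$.

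For $(2)\Rightarrow(3)$, assume $J$ is minimal with $J^2=0$. Any nonzero small right ideal $A$ satisfies $A\subseteq Rad(R_R)=J$, so minimality forces $A=J$, which is minimal; thus every nonzero right ideal is minimal or non-small. For $(3)\Rightarrow(1)$, condition $(3)$ makes every nonzero small right ideal minimal, and there can be at most one: if $S_1\neq S_2$ were two such, minimality gives $S_1\cap S_2=0$, so $S_1+S_2$ is a small right ideal properly containing the minimal ideal $S_1$, i.e. small but not minimal, contradicting $(3)$. Since $J=Rad(R_R)$ is itself a small right ideal, in the nondegenerate situation $J\neq0$ (the only case in which a $us$-ring can occur) it is the unique nonzero small right ideal, so $R$ is a right $us$-ring.

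The absorption arguments built on $J=Rad(R_R)$ are routine; the one genuinely delicate point is excluding $J=eR$ in $(1)\Rightarrow(2)$, that is, showing a nonzero small right ideal cannot be a direct summand, which is precisely where Brauer's theorem is needed to pin down $J^2=0$. (One could instead avoid Brauer by noting $J^2\in\{0,J\}$ by minimality and killing $J^2=J$ via Nakayama, since the minimal ideal $J$ is finitely generated.) I would also take care in $(3)\Rightarrow(1)$ to read the hypothesis in the nonzero-radical setting, since a ring with $Rad(R_R)=0$ satisfies $(3)$ vacuously yet is not a $us$-ring.
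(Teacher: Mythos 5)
Your proof is correct and follows essentially the same route as the paper: the cyclic implications $(1)\Rightarrow(2)\Rightarrow(3)\Rightarrow(1)$, with minimality of $J$ coming from the fact that every nonzero subideal of the unique small ideal is again small, $J^2=0$ coming from Brauer's dichotomy (Theorem \ref{Br}) once $J=eR$ is excluded, and the reverse direction using that $J$ is always small. Your added care --- spelling out why a nonzero small right ideal cannot be a direct summand, and flagging the degenerate case $J(R)=0$ in $(3)\Rightarrow(1)$, which the paper's proof silently skips --- only tightens the same argument.
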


\begin{proof}
 $(1) \Rightarrow (2)$. Clearly $J$ is the unique small right ideal of $R$, hence it is minimal as a right ideal (for, if $S$ is a non-zero right ideal of $R$ contained in $J$, then  $S$ is small by \cite[Lemma 2.2(1)]{al-sh},  so $S=J$ ). On the other hand, $J$  have no any idempotent element, so  $J^2 = 0$ by Theorem \ref{Br}.\\
 $(2) \Rightarrow (3)$. Let $A$ be a non-zero right ideal of $R$. If $A$ is not minimal, then $A \ne J$, hence it is non-small. \\
  $(3) \Rightarrow (1)$. Since $J(R)$ is small as a right (and left) ideal of $R$, it is minimal right ideal of $R$  by $(3)$. Hence,  $R$ is an $us$-ring.
\end{proof}

\begin{defi}
    An $R$-module $M$ is said to be dual-local if it has a unique minimal submodule, that is, $M$ has a non-zero submodule $N$ that contained in every non-zero submodule of $M$.
\end{defi}

  In \cite{kmsh}, a ring with a unique essential proper right ideal is called a right $ue$-ring. Similarly, an  $R$-module with a unique
  proper essential submodule is called a $ue$-module. The following theorem seems to be interesting.

\begin{theo}\label{dloc}
     The following statements are equivalent for an $R$-module $M$.
    \begin{enumerate}
        \item $M$ is a local $us$-module.
        \item $M$ is a dual-local $ue$-module.
        \item $M$ has a unique non-trivial submodule.
    \end{enumerate}
    Moreover, for such a module $Soc(M)=Rad(M)$.
\end{theo}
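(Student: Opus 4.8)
The plan is to route everything through statement (3), proving $(1)\Leftrightarrow(3)$ and $(2)\Leftrightarrow(3)$ separately, since (3) is the most concrete of the three: it says the submodule lattice of $M$ is exactly the three-term chain $0\subsetneq N\subsetneq M$ with a single intermediate submodule $N$. Once (3) is in hand, the ``moreover'' clause falls out immediately, because $N$ is then simultaneously the unique minimal and the unique maximal submodule, so $Soc(M)=N=Rad(M)$. The entire argument rests on two elementary observations about these lattices; everything else is bookkeeping against the chain.

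For $(1)\Leftrightarrow(3)$ I would use the observation that \emph{in a local module every proper submodule is small}. Indeed, if $K$ is the unique maximal submodule of a local module $M$, then by definition $K$ contains every proper submodule; hence for any proper submodules $L$ and $A$ we have $L+A\subseteq K\subsetneq M$, so $L\ll M$. Thus in a local module the non-zero small submodules are exactly the non-zero proper submodules, and a local $us$-module, having a unique non-zero small submodule, has a unique non-zero proper submodule, which is precisely (3). Conversely, if $M$ has the lattice $0\subsetneq N\subsetneq M$, then $N$ is the unique maximal submodule and contains all proper submodules, so $M$ is local, while $N$ is small (both $N+0=N$ and $N+N=N$ are $\neq M$) and is the only non-zero small submodule, giving the $us$ property.

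For $(2)\Leftrightarrow(3)$ I would argue dually, via the observation that \emph{in a dual-local module every non-zero submodule is essential}. If $N$ is the unique minimal submodule, then by definition $N$ lies inside every non-zero submodule, so for any non-zero submodules $B$ and $A$ we get $0\neq N\subseteq B\cap A$, whence $B\subseteq_e M$. Thus the proper essential submodules are exactly the non-zero proper submodules, and a dual-local $ue$-module, having a unique proper essential submodule, again has a unique non-zero proper submodule, i.e. (3). The converse is symmetric to the $us$ case: from the lattice $0\subsetneq N\subsetneq M$ one reads off that $N$ is the unique minimal submodule contained in every non-zero submodule (so $M$ is dual-local) and the unique proper essential submodule (so $M$ is a $ue$-module).

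I do not expect a serious obstacle; the only point that needs care is matching up the existence clauses. A $us$-module and a $ue$-module are each required to \emph{possess} their distinguished submodule, so in both directions one must confirm that the relevant non-zero proper submodule actually exists (equivalently, that $M$ is neither $0$ nor simple), which rules out the degenerate lattices and pins the lattice down to $0\subsetneq N\subsetneq M$. After that, verifying smallness of $N$, essentiality of $N$, and the identifications $Soc(M)=Rad(M)=N$ are all one-line checks against the three-element lattice.
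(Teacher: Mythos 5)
Your proposal is correct and follows essentially the same route as the paper: both arguments funnel $(1)$ and $(2)$ through $(3)$ by showing the distinguished submodule is simultaneously minimal and maximal, with the two directions treated as duals of one another. The only cosmetic difference is your choice of key lemma (in a local module every proper submodule is small; in a dual-local module every non-zero submodule is essential), where the paper instead observes that every non-zero submodule of the small submodule $Rad(M)$ is again small, forcing $Rad(M)$ to be minimal --- both are one-line observations leading to the same conclusion.
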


\begin{proof}
    $(3)\Rightarrow (1)$ and $(3)\Rightarrow (2)$ are clear.\\
    $(1)\Rightarrow (3)$
    Since $M$ is local, $Rad(M)$ is the unique maximal submodule of $M$. Since $M$ is a $us$-module, $Rad(M)$ and it the unique non-zero small submodule of $M$. If $0 \ne A$ is a submodule of $Rad(M)$, then $A$ is small and so $A=Rad(M)$. Hence $Rad(M)$ is also a minimal submodule of $M$ and so $Rad(M)$ is both a maximal and a minimal submodule of $M$. Again, let $A$ be a non-trivial submodule of $M$. Then $A \subseteq Rad(M)$ and by minimality of $Rad(M)$ we have $A = Rad(M)$. This shows that $Rad(M)$ is the unique non-trivial submodule of $M$ and we are done.\\ 
  $(2)\Rightarrow(3)$ Since $M$ is dual-local, $Soc(M)$ is the unique minimal submodule of $M$. Since $M$ is a $ue$-module, $Soc(M)$ is also the unique essential submodule of $M$. If $ A$ is a proper submodule of $M$ such that $Soc(M) \subseteq A$, then $A$ is essential and so $A=Soc(M)$. Hence, $Soc(M)$ is also a maximal submodule of $M$ and so $Soc(M)$ is both a minimal and a maximal submodule of $M$. Again, let $A$ be a non-trivial submodule of $M$. Then $Soc(M) \subseteq A$ and by maximality of $Soc(M)$, we have $A = Soc(M)$. This shows that $Soc(M)$ is the unique non-trivial submodule of $M$ and we are done.\\
    Moreover $(3)$ implies that $Soc(M)=Rad(M)$ is just the unique non-trivial submodule of $M$.
\end{proof}

\begin{coro}
    Let $R$ be a local and right (left) $us$-ring. Then as a right (left) ideal, $J(R)$ is both minimal and maximal, hence it is the unique non-trivial right ($2$-sided) ideal of $R$.
\end{coro}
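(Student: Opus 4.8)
The plan is to recognize that, under the stated hypotheses, $R$ viewed as the right module $R_R$ is precisely a local $us$-module in the module-theoretic sense, and then to invoke Theorem~\ref{dloc}. Accordingly, I would first verify that $R_R$ is a local module (in the sense of the introduction, namely that it has exactly one maximal submodule containing every proper submodule). Since $R$ is a local ring, $J(R)$ is its unique maximal right ideal; and since every proper right ideal of $R$ extends to a maximal right ideal by Zorn's lemma, uniqueness forces every proper right ideal to lie inside $J(R)$. Hence $J(R)$ is the unique maximal submodule of $R_R$ containing all proper submodules, so $R_R$ is local as a module.

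Next I would observe that the hypothesis that $R$ is a right $us$-ring says exactly that $R_R$ is a $us$-module. Combining this with the previous paragraph, $R_R$ satisfies condition $(1)$ of Theorem~\ref{dloc}. Applying that theorem yields condition $(3)$: the module $R_R$ has a unique non-trivial submodule, which is $J(R) = Soc(R_R) = Rad(R_R)$, and this $J(R)$ is simultaneously minimal and maximal as a right ideal. (Note that the $us$-ring hypothesis guarantees a non-zero small right ideal, so $J(R) \ne 0$, while locality gives $J(R) \ne R$; thus $J(R)$ is genuinely non-trivial.) Finally, because the Jacobson radical is always a two-sided ideal, the fact that $J(R)$ is the unique non-trivial right ideal immediately upgrades to the statement that it is the unique non-trivial two-sided ideal as well. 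The left-handed version follows by the symmetric argument, using that in a local ring the unique maximal left ideal also coincides with $J(R)$.

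The only genuinely load-bearing step is the first one, namely the passage from the ring-theoretic notion of a local ring to the module-theoretic notion that $R_R$ is a local module; everything afterward is a direct appeal to Theorem~\ref{dloc} together with the standard fact that $J(R)$ is two-sided. I do not expect any real obstacle here, since that first step is an elementary maximality argument: the content of the corollary is essentially that the ring-theoretic hypotheses specialize the module equivalences of Theorem~\ref{dloc} to the regular module $R_R$.
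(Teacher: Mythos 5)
Your proposal is correct and follows exactly the route the paper intends: the corollary is stated without proof as an immediate specialization of Theorem~\ref{dloc} to the regular module $R_R$, and your verification that a local ring makes $R_R$ a local module (every proper right ideal lies in the unique maximal right ideal $J(R)$) is precisely the small bridging step needed. The remaining observations --- that $J(R)\ne 0$ by the $us$-hypothesis and that $J(R)$ is two-sided --- are also handled correctly.
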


\begin{defi}
    An $R$-module $M$ with only finitely many small and minimal (resp., unique small and minimal) submodules is called an  $fsm$-module (resp., $usm$-module). A ring $R$ with only finitely many small and minimal right ideals is called
    a right $fsm$-ring (resp., $usm$-ring).
\end{defi}

\begin{rem}
    Note that every $fs$-module is an $fsm$-module, but  the converse is not true ingeneral.  For example, $\mathbb Z_{p^{\infty}}$  as a $\mathbb Z$-module is an $fsm$-module which is not an $fs$-module. More generally, for every Artinian module $M$ which is not semisimple, we have  $Rad(M) \neq 0$, so $M$ has at least one small and minimal submodule. Since $Soc(M)$ is  Artinian, so it is finitely generated, hence $M$ is an $fsm$-module.
\end{rem}

\begin{pro}
    Let $M$ be an $R$-module. Then  $M$  is  an $fsm$-module if and only if  $Rad(M)$ has only finitely many minimal submodules.
\end{pro}

\begin{proof}
    In case that $Rad(M) = 0$, we need no explanation. For the other case, note that from Remark \ref {R3}, it follows that $M$ has only finitely many small and minimal submodules if and only if $Rad(M)$ has only finitely many minimal submodules.
\end{proof}

\section{Multiplication modules and dimension symmetry}
Troughout this section, $R$ is a commutative ring.
The concept of multiplication modules has been studied in  many articles, see for example \cite {ch-sm, nao}. An $R$-module $M$ is called multiplication if for every submodule $N$ of $M$, there exists an ideal $I$ of $R$ such that $N = MI$. In this case, we can take $I = (N:M) = ann(\frac{M}{N}) = \{r \in R : Mr \subseteq N\}$. The class of multiplication modules contains all projective ideals, all cyclic modules, all finitely generated distributive modules and all ideals $eR$, where $e$ is an idempotent. In this section, we focus on multiplication $fs$-modules.
 We recall that  if $M$ is an $R$-module and  $S=End_R(M)$, then $_SM_R$, that is, $M$ is an $(S-R)$-bimodule.
Also,  an $R$-submodule $X$ of $M$ is called fully invariant provided it is also an $S$-submodule of $M$, or equivalently, $f(X) \subseteq X$, for every $f \in S=End_R(M)$. \\

    We cite the following facts from \cite{nao} and \cite[Lemma 1]{ch-sm}.

  \begin{pro}\label{p4.1}
    Let $M$ be a multiplication $R$-module. Then $S=End_R(M)$ is a commutative ring.
  \end{pro}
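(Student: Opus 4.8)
The goal is to show that for a multiplication $R$-module $M$ over a commutative ring $R$, the endomorphism ring $S = \End_R(M)$ is commutative. The plan is to exploit the defining property of multiplication modules, namely that every submodule $N$ satisfies $N = MI$ for some ideal $I$ of $R$, together with the commutativity of $R$, to reduce the comparison $fg = gf$ for $f, g \in S$ to an equality that can be checked element by element.

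First I would recall the key structural fact about endomorphisms of multiplication modules: any $f \in S = \End_R(M)$ is determined by its action via scalars from $R$ in a suitably local sense. More precisely, the standard approach is to show that for each $f \in S$ and each $m \in M$, the cyclic submodule $Rm = M(m:M)$ (where $(m:M) = \{r \in R : Mr \subseteq Rm\}$) interacts well with $f$, because $f(Rm) \subseteq Rm$ would force $f$ to act like multiplication by a scalar on that piece. The cleanest route is to invoke the fact that multiplication modules enjoy the property that every $R$-endomorphism is "locally multiplication," so that for any $m$ there is $r \in R$ with $f(m) = rm$; the commutativity of $R$ then propagates to $S$.

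The concrete steps I would carry out are: (i) Take arbitrary $f, g \in S$ and an arbitrary element $m \in M$, and aim to prove $f(g(m)) = g(f(m))$. (ii) Using the multiplication property, write the cyclic submodule generated by $m$ as $Mm$ is governed by an ideal, and exploit that $f$ and $g$, being $R$-linear, send $m$ into submodules expressible through ideals of $R$; in particular locate scalars $r, s \in R$ with $f(m) = rm$ and $g(m) = sm$ on the relevant cyclic submodule. (iii) Then compute $f(g(m)) = f(sm) = s f(m) = s r m$ and $g(f(m)) = g(rm) = r g(m) = r s m$, and conclude equality by commutativity of $R$, since $sr = rs$. (iv) Since $m$ was arbitrary, $fg = gf$ on all of $M$, giving commutativity of $S$.

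The main obstacle I anticipate is step (ii): justifying that each endomorphism genuinely acts as multiplication by a single scalar on the appropriate cyclic (or finitely generated) piece. This is not automatic from $R$-linearity alone and is where the multiplication hypothesis does the real work — one must argue that $f(m) \in Mm$ (equivalently that $f$ is fully invariant on cyclic submodules, so that $f(m) = \lambda m$ for some $\lambda \in R$), which typically requires the multiplication structure to force endomorphisms to preserve the ideal decomposition of submodules. Since this proposition is cited from \cite{nao} and \cite[Lemma 1]{ch-sm}, I would lean on those sources for the delicate verification that $f$ restricts to scalar multiplication, and then the commutativity of $R$ finishes the argument immediately. If a self-contained argument were required, I would instead use that $M$ being multiplication implies $M$ is "distributive enough" that $\Hom_R(M,M)$ embeds into a product of localizations $\prod_{\mathfrak{p}} \Hom_{R_{\mathfrak{p}}}(M_{\mathfrak{p}}, M_{\mathfrak{p}})$, each factor being a scalar ring over the local commutative ring $R_{\mathfrak{p}}$, whence the product is commutative.
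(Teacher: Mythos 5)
Your proposal is essentially correct, and it reconstructs the standard argument from the sources the paper cites; note that the paper itself offers no proof of this proposition at all --- it is stated as a cited fact from \cite{nao} and \cite[Lemma 1]{ch-sm}, so there is no in-paper proof to compare against beyond that citation.

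The one point worth settling is the step you flag as the ``main obstacle,'' namely that $f(m)$ lies in the cyclic submodule generated by $m$. This is not delicate at all: it is an immediate consequence of the multiplication hypothesis and is exactly the content of the paper's Proposition \ref{c4.1} (every submodule of a multiplication module is fully invariant). Concretely, for $m \in M$ put $N = Rm$ and write $N = MI$ with $I = (N:M)$; then for any $f \in S$ one has $f(N) = f(MI) = f(M)I \subseteq MI = N$, and since $M$ is unital, $m \in Rm$, so $f(m) = rm$ for some $r \in R$ (depending on $f$ and $m$). With that in hand your computation closes correctly: if also $g(m) = sm$, then $fg(m) = f(sm) = sf(m) = srm$ and $gf(m) = g(rm) = rg(m) = rsm$, which agree because $R$ is commutative; since $m$ was arbitrary, $fg = gf$. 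The fact that $r$ and $s$ depend on $m$ is harmless because the identity $fg = gf$ is verified pointwise. Your alternative localization route is unnecessary; the direct argument above is both self-contained and shorter.
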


  \begin{pro}\label{c4.1}
    Let $M$ be a multiplication module. Then every submodule of $M$ is fully invariant.
\end{pro}

 Previous proposition says that every $R$-submodule of a multiplication module is an $S$-submodule, where $S=End_R(M)$. In the next theorem we show that not only for multiplication $R$-modules, but also for every $R$-module $M$, every $S$-submodule is an $R$-submodule.

  \begin{pro}\label{srsub}
    Let $M$ be an $R$-module and $S=End_R(M)$. Then every $S$-submodule of $M$ is an $R$-submodule.
\end{pro}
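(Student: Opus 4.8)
The plan is to exploit the commutativity of $R$, which is in force throughout this section, in order to realize each right multiplication by a ring element as an element of $S=End_R(M)$. First I would fix $r \in R$ and consider the map $\rho_r : M \to M$ defined by $\rho_r(m)=mr$. This map is clearly additive, so the only thing to check is $R$-linearity: for $m \in M$ and $s \in R$ one has $\rho_r(ms)=(ms)r=m(sr)$ while $\rho_r(m)s=(mr)s=m(rs)$, and these agree precisely because $R$ is commutative. Hence $\rho_r \in S$ for every $r \in R$.

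With this in hand the conclusion is immediate. Let $N$ be an $S$-submodule of $M$; by definition $N$ is an additive subgroup of $M$ closed under the action of every element of $S$. Taking an arbitrary $n \in N$ and $r \in R$, the closure of $N$ under $\rho_r \in S$ yields $nr=\rho_r(n) \in N$. Thus $N$ is closed under the $R$-action, and since it is already an additive subgroup, it is an $R$-submodule of $M$.

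The only point that requires care, and the sole place where a hypothesis is used, is the verification that $\rho_r$ is $R$-linear; this is exactly where commutativity of $R$ enters, for without it $\rho_r$ would be merely additive and need not lie in $S$. I do not expect any further obstacle, as the remainder of the argument is a one-line application of the defining closure property of an $S$-submodule. It is worth noting that, unlike Proposition \ref{c4.1}, this direction needs no multiplication hypothesis on $M$: it holds for every $R$-module over a commutative ring, which is consistent with the remark preceding the statement.
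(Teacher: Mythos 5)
Your proof is correct and is essentially the same as the paper's: both define the right-multiplication map $m \mapsto mr$, observe it lies in $S=\mathrm{End}_R(M)$ (which, as you rightly note, uses the standing assumption that $R$ is commutative in this section), and conclude that an $S$-submodule is closed under the $R$-action. The only difference is that you spell out the $R$-linearity check that the paper dismisses with ``it is easy to see.''
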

\begin{proof}
    Let $N$ be an $S$-submodule of $M$. It suffices to show that  $Nr \subseteq N$, for any $r \in R$. For this, let $r \in R$ and define $f: M_R\rightarrow M_R$ by $f(m) = mr$, for each $m \in M$. It is easy to see that $f \in S=End_R(M)$. Hence, $Nr = f(N) \subseteq N$, that is,  $N$ is an $R$-submodule of $M$.
\end{proof}

The next theorem can be one of the major theorems in the theory of commutative rings.

\begin{theo}\label{maj}
   Let $M$ be a multiplication $R$-module, $N \subseteq M$ and $S=End_R(M)$. Then
   \begin{enumerate}
    \item $N$ is an $R$-submodule of $M$ if and only if it is an $S$-submodule of $M$.
    \item The lattices of $R$-submodules of $M$ and $S$-submodules of $M$ are coincide.
   \end{enumerate}
\end{theo}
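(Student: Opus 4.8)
The plan is to deduce both parts directly from the two immediately preceding propositions, since together they pin down both implications of the biconditional in part (1). For the forward direction of (1), I would invoke Proposition \ref{c4.1}: because $M$ is a multiplication module, every $R$-submodule of $M$ is fully invariant, which is precisely the assertion that every $R$-submodule is simultaneously an $S$-submodule. For the reverse direction I would appeal to Proposition \ref{srsub}, which holds for an \emph{arbitrary} $R$-module (no multiplication hypothesis is needed) and states that every $S$-submodule of $M$ is automatically an $R$-submodule. Chaining these two facts yields the equivalence in (1): a subset $N \subseteq M$ is an $R$-submodule if and only if it is an $S$-submodule.

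For part (2), I would observe that part (1) establishes that the underlying sets of the two lattices coincide, namely the collection of $R$-submodules of $M$ equals the collection of $S$-submodules of $M$. Both collections carry one and the same partial order, inclusion. The key point is that a lattice of submodules is completely determined by this poset structure: the meet of two submodules is their intersection (the greatest lower bound) and the join is their sum (the least upper bound), and both are computed purely from the inclusion order. Since the two posets are literally identical, their meets and joins agree, and hence the two lattices coincide.

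If I wanted the agreement of joins to be fully explicit rather than to rely on the order-theoretic principle, I would note that for submodules $A, B$ the abelian-group sum $A + B$ is the same set whether $A$ and $B$ are regarded over $R$ or over $S$; this set is an $R$-submodule because $A$ and $B$ are, and by part (1) it is also an $S$-submodule, so it serves as the join in both lattices. Dually, intersections are set-theoretic and insensitive to the acting ring, so the meets trivially match.

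I do not expect a genuine obstacle here: the substantive content of the theorem was already discharged in Propositions \ref{c4.1} and \ref{srsub}, and what remains is the essentially formal observation that identical posets yield identical lattices. The only spot deserving a moment's care is confirming that the lattice join (the module sum) does not secretly depend on which ring acts, which is settled by the remark that the sum of two submodules is a fixed abelian subgroup that part (1) certifies as a submodule over both rings.
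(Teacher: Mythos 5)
Your proposal is correct and follows exactly the paper's own route: part (1) is obtained by combining Proposition \ref{c4.1} (forward direction) with Proposition \ref{srsub} (reverse direction), and part (2) is deduced from part (1). Your additional remarks verifying that meets and joins agree in the two lattices only make explicit what the paper leaves implicit.
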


\begin{proof}
    $(1)$ It comes out from Propositions \ref{c4.1} and \ref{srsub}.\\
    $(2)$ It follows by the part $(1)$.
\end{proof}

\begin{coro}
 Let $M$ be a multiplication $R$-module, $N \subseteq M$ and $S=End_R(M)$. Then $N$ is an essential (resp., a small) $R$-submodule of $M$ if and only if it is an essential (resp., a small) $S$-submodule of $M$.
\end{coro}

\begin{proof}
 It follows from   of Theorem \ref{maj}$(1)$.
\end{proof}

In \cite{ja-sh-3}, as  an special case of the Krull symmetry
property, see also the appendix of \cite{goo},  we study
$R$-modules $M$ for which $\kdim\,M_R = \kdim\,_SM$, where
$S=End_R(M)$. It is proved that if $R$ is an  $FBN$ ring and $M$
a fully bounded $NPG$ module (i.e.,  Noetherian, projective and
generator), then  $\kdim\,M_R = \kdim\,_SM$.  The next theorem
shows that for multiplication modules, this symmetry also holds,
for the Noetherian, Goldie and hollow dimensions and it also
holds for the properties of being  $\alpha$-DICC, $\alpha$-short
and $\alpha$-Krull. For more details on these latter concepts, we
refer the reader, respectively to \cite{ka3}, \cite {d-k-sh} and
\cite{ja-sh-2}.

\begin{theo}
Let  $M$ be a multiplication $R$-module and $S=End_R(M)$. Then
\begin{enumerate}
    \item $\gdim\,M_R = \gdim\,_SM$.
    \item $\hdim\,M_R = \hdim_SM$.
    \item $\kdim\,M_R = \kdim\,_SM$.
    \item $\ndim\,M_R = \ndim\,_SM$.\\
    Moreover, for every ordinal $\alpha$,
    \item $M_R$ is $\alpha$-DICC if  and only if $_SM$ is $\alpha$-DICC.
    \item $M_R$ is $\alpha$-short if and only if $_SM$ is $\alpha$-short.
    \item $M_R$ is $\alpha$-Krull if and only if $_SM$ is $\alpha$-Krull.
\end{enumerate}
\end{theo}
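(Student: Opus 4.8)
The plan is to exploit the fact that every invariant appearing in the statement is determined entirely by the lattice of submodules of $M$, together with the derived notions of essential and small submodule. By Theorem \ref{maj}$(2)$ the lattice of $R$-submodules of $M$ coincides, as a partially ordered set (indeed as a lattice), with the lattice of $S$-submodules of $M$; moreover, by the Corollary following that theorem, a submodule is essential (respectively small) as an $R$-submodule precisely when it is essential (respectively small) as an $S$-submodule. Thus the two module structures $M_R$ and $_SM$ present literally the same submodule lattice equipped with the same essential and small submodules, and the whole theorem reduces to the observation that each listed quantity is a function only of these data.

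First I would dispatch the Krull and Noetherian dimensions, parts $(3)$ and $(4)$. Recall that $\kdim$ is the deviation of the submodule lattice and $\ndim$ is its codeviation (the deviation of the opposite lattice); both are computed recursively from the ordering of submodules alone, with no reference to the acting scalars. Since the lattice $\mathcal{L}(M_R)$ equals $\mathcal{L}(_SM)$ by Theorem \ref{maj}$(2)$, their deviations and codeviations agree, giving $\kdim\,M_R = \kdim\,_SM$ and $\ndim\,M_R = \ndim\,_SM$ at once.

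Next I would treat the Goldie and hollow dimensions, parts $(1)$ and $(2)$. By the definitions recalled in the introduction, finite Goldie (respectively hollow) dimension, and more generally the values of $\gdim$ (respectively $\hdim$), are read off from ascending (respectively descending) chains of submodules together with the essentiality (respectively smallness) of the relevant inclusions. As both the chains and the notions of essential and small submodule transfer between $M_R$ and $_SM$ by Theorem \ref{maj} and its Corollary, these dimensions coincide. The remaining items $(5)$--$(7)$ are handled identically: $\alpha$-DICC, $\alpha$-short and $\alpha$-Krull are each defined purely in terms of chains in the submodule lattice and the associated essential or small submodules, so each property passes from $M_R$ to $_SM$ and back verbatim.

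The main obstacle is not any single deduction but rather the bookkeeping of verifying, for each of the seven notions, that its definition is phrased entirely in the language of the submodule lattice and of essential/small submodules, so that Theorem \ref{maj} and its Corollary actually apply. Once this is confirmed by consulting the respective references \cite{ka3}, \cite{d-k-sh} and \cite{ja-sh-2} for the $\alpha$-DICC, $\alpha$-short and $\alpha$-Krull notions, no genuine computation remains; the coincidence of the two lattices does all the work.
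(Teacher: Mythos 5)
Your proposal is correct and follows exactly the paper's route: the paper's entire proof is the one-line assertion that all seven statements ``naturally come out from Theorem \ref{maj}(2),'' i.e., from the coincidence of the two submodule lattices. You have simply made explicit the lattice-theoretic bookkeeping that the paper leaves implicit.
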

\begin{proof}
All these naturally come out from  of Theorem \ref{maj}$(2)$.
\end{proof}

 We recall that an $R$-module  $M$ is called self-generator if for each submodule $X$ of $M$, there exists  $\Delta \subseteq S=End_R(M)$,  such that
  $N= \sum_{f \in \Delta} f(M)$.
  For any $X\subseteq M$, we set $I_X =\{ f\in S: f(M) \subseteq X\}$.

 \begin{pro}\label{l4.1}
    Let $M$ be a self-generator multiplication $R$-module  and $S= End_R(M)$. Then  $M$ is a multiplication $S$-module.
  \end{pro}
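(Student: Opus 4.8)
The plan is to use the set $I_N$ introduced just before the statement as the witness of multiplicativity. Concretely, given an $S$-submodule $N$ of $M$, I want to exhibit an ideal $I$ of $S$ with $N = IM$, where $IM$ denotes the $S$-submodule $\sum_{f \in I} f(M)$ obtained by applying the endomorphisms in $I$ to $M$ (this is the left-module analogue of the relation $N = MI$ in the definition of a multiplication module). The obvious candidate is $I_N = \{f \in S : f(M) \subseteq N\}$.

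First I would record two facts that make the bimodule bookkeeping painless. By Proposition \ref{p4.1} the ring $S$ is commutative, and by Theorem \ref{maj}$(1)$ the $S$-submodules of $M$ coincide with its $R$-submodules. In particular $N$ is simultaneously an $R$-submodule and an $S$-submodule, so I may apply to $N$ both the self-generator hypothesis (which is a statement about $R$-submodules) and the $S$-invariance $sN \subseteq N$ for every $s \in S$.

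Next I would verify that $I_N$ is an ideal of $S$. It is clearly an additive subgroup, and for $f \in I_N$ and $s \in S$ one has $(sf)(M) = s(f(M)) \subseteq s(N) \subseteq N$, using that $N$ is an $S$-submodule; since $S$ is commutative this already shows $I_N$ is a two-sided ideal. The inclusion $I_N M \subseteq N$ is then immediate, because $I_N M = \sum_{f \in I_N} f(M)$ and each summand lies in $N$ by the very definition of $I_N$.

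The reverse inclusion $N \subseteq I_N M$ is where the self-generator hypothesis enters, and it is the only step with real content. Since $N$ is an $R$-submodule and $M$ is a self-generator, there is a subset $\Delta \subseteq S$ with $N = \sum_{f \in \Delta} f(M)$. For each $f \in \Delta$ we have $f(M) \subseteq N$, hence $f \in I_N$, so $\Delta \subseteq I_N$ and therefore $N = \sum_{f \in \Delta} f(M) \subseteq \sum_{f \in I_N} f(M) = I_N M$. Combining the two inclusions yields $N = I_N M$, which is exactly the assertion that $M$ is a multiplication $S$-module. I do not expect a genuine obstacle: once Theorem \ref{maj} identifies the two submodule lattices, the argument is short, and the only point requiring care is noticing that it is precisely the $S$-invariance of $N$ that lets $I_N$ absorb multiplication by $S$.
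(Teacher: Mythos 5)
Your proof is correct and follows the same route as the paper: both reduce to showing $I_N M = N$ for an $S$-submodule $N$, after observing that $N$ is also an $R$-submodule and that $S$ is commutative. The only difference is that the paper outsources the identity $I_N M = N$ to a citation (Lemma 3.4(1) of the reference \cite{j-sh-1}), whereas you prove it directly from the self-generator hypothesis, which makes your version self-contained.
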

  \begin{proof}
    Let $X \subseteq M$ be an $S$-submodule of $M$. Then $X$ is also an $R$-submodule of $M$. Also, $S$ is a commutative ring, by Proposition \ref{p4.1}.  Now, we may invoke \cite[Lemma 3.4(1)]{j-sh-1} to see that $I_X M = X$ and this completes the proof.
  \end{proof}

 We conclude this paper with the next result that seems to be interesting.

\begin{theo}
Let $M$ be a self-generator multiplication $R$-module and $S=End_R(M)$. Then $M_R$ is an $fs$-module if and only if $_SM$ is an $fs$-module
\end{theo}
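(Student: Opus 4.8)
The plan is to notice that being an $fs$-module is a property depending only on the collection of non-zero small submodules, and then to show that this collection is literally identical whether $M$ is regarded as the right $R$-module $M_R$ or as the left $S$-module $_SM$. Since the present section already supplies the tools that identify the two submodule lattices and transfer smallness between them, the whole equivalence should drop out with no case analysis. The self-generator hypothesis is not what drives the counting; its role, via Proposition~\ref{l4.1}, is to guarantee that $M$ is simultaneously a multiplication module over $S$, so that $R$ and $S$ play perfectly symmetric parts and the argument reads the same in both directions.

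Concretely, I would first invoke Theorem~\ref{maj}: because $M$ is a multiplication $R$-module, a subset $N \subseteq M$ is an $R$-submodule if and only if it is an $S$-submodule, and hence the lattice of $R$-submodules of $M$ and the lattice of $S$-submodules of $M$ coincide. In particular they share the same zero submodule, the same top element $M$, and the same join: the sum $N + A$ of two submodules, computed as a set, is a common submodule and is simultaneously the join in both lattices. I would then call on the corollary to Theorem~\ref{maj}, which records that a submodule $N$ is small as an $R$-submodule if and only if it is small as an $S$-submodule. Putting these together, the set of non-zero small $R$-submodules of $M$ is exactly the set of non-zero small $S$-submodules of $M$.

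From here the theorem is immediate: $M_R$ is an $fs$-module precisely when that common set is finite, and this is the very same condition under which $_SM$ is an $fs$-module. The one point that genuinely carries the content---and the step I would flag as the real heart of the matter---is the transfer of smallness, which is already handled in the corollary. What makes it work is that $N \ll M$ is a lattice-theoretic statement, namely $N + A \ne M$ for every proper submodule $A$; since the submodule lattices and their sum operation agree, this condition cannot distinguish whether the scalars are taken from $R$ or from $S$. Consequently no separate argument is needed for the two implications of the $\Longleftrightarrow$, and the proof is really a single observation once Theorem~\ref{maj} and its corollary are in hand.
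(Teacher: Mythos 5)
Your proof is correct and follows essentially the same route as the paper, which simply cites Theorem~\ref{maj}(2): the two submodule lattices coincide, smallness of $N$ is the lattice-theoretic condition $N+A\ne M$ for all proper $A$, so the non-zero small submodules of $M_R$ and of $_SM$ form one and the same set and finiteness transfers trivially. Your side observation is also accurate --- the paper's own one-line argument never invokes the self-generator hypothesis, so that assumption (whose only role would be, via Proposition~\ref{l4.1}, to make $_SM$ a multiplication module) appears superfluous for this particular statement.
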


\begin{proof}
    It follows from Theorem \ref{maj}$(2)$.\\
    
    \end{proof}
     
     \section{Author Contributions} 
Conceptualization, N. Shirali.; methodology, N. Shirali, S.F. Mousavinsab, S.M. Javdannezhad; 
investigation, N. Shirali, S.F. Mousavinsab, S.F. Javdannezhad;
 writing—review and editing, N. Shirali;
 funding N. Shirali;  acquisition, N. Shirali, S.F. Mousavinsab, S.M. Javdannezhad; All authors have read and agreed to the published version of the manuscript. 
\section{Acknowledgments}
The authors would like to thank the referees for reading the article carefully and giving
useful comments and efforts towards improving the manuscript.  
  
  \section{Funding}
The third author is grateful to the Research Council of Shahid Chamran University of Ahvaz, Iran,  for financial support , and Grant No. SCU.MM1400.473.

\end{document}